\documentclass[12pt]{elsarticle}






\usepackage[utf8]{inputenc}
\usepackage[margin=1in]{geometry}
\usepackage{amsmath}
\usepackage{amssymb}
\usepackage{amsthm}
\usepackage{thmtools}
\usepackage{hyperref, cleveref}
\usepackage{bm}
\usepackage{bbm}
\usepackage{parskip}
\usepackage{xcolor}
\usepackage{mathrsfs}
\usepackage{comment}
\usepackage{tikz}
\usepackage{enumitem}
\usepackage{overpic}
\usetikzlibrary{decorations.pathreplacing,intersections,arrows.meta}

\setlength{\topsep}{0.3em}
\declaretheorem[numberwithin=section]{theorem}
\declaretheorem[numberwithin=section, sibling=theorem]{lemma, corollary, proposition}
\declaretheorem[numbered=no]{definition}
\declaretheorem[numberwithin=section, sibling=theorem, style=remark]{example}

\declaretheorem[name=Main Result]{mainresult}
\crefname{mainresult}{main result}{main results}
\Crefname{mainresult}{Main Result}{Main Results}

\newtheorem*{corollary*}{Corollary}
\newtheorem*{proposition*}{Proposition}
\newtheorem*{lemma*}{Lemma}
\newtheorem*{theorem*}{Theorem}

\newcommand{\R}{\mathbb{R}}

\newcommand{\N}{\mathbb{N}}
\newcommand{\Q}{\mathbb{Q}}
\newcommand{\F}{\mathbb{F}}

\newcommand{\set}[1]{{\left\{#1\right\}}}
\newcommand{\paren}[1]{{\left(#1\right)}}
\newcommand{\parensize}[2]{{#1(#2#1)}}
\newcommand{\bracket}[1]{{\left[#1\right]}}
\newcommand{\floor}[1]{{\left\lfloor#1\right\rfloor}}
\newcommand{\ceil}[1]{{\left\lceil#1\right\rceil}}
\newcommand{\norm}[1]{\left\lVert#1\right\rVert}
\newcommand{\normsize}[2]{#1\lVert#2#1\rVert}

\newcommand{\abs}[1]{\left|#1\right|}
\newcommand{\abssize}[2]{#1|#2#1|}

\renewcommand{\bar}[1]{\overline{#1}}

\newcommand{\pmat}[1]{\begin{pmatrix}#1\end{pmatrix}}

\let\mod\undefined
\DeclareMathOperator{\mod}{mod}
\let\Re\undefined
\DeclareMathOperator{\Re}{Re}

\DeclareMathOperator{\Lip}{Lip}

\renewcommand{\epsilon}{\varepsilon}

\renewcommand{\theequation}{\arabic{section}.\arabic{equation}}
\usepackage{chngcntr}
\counterwithin{equation}{section}
\newcommand{\AnchorRight}{\hspace{1em}&\hspace{-1em}}

\crefname{equation}{}{} 

\creflabelformat{equation}{(#2#1#3)}

\newcommand{\eqlabel}[1]{\stepcounter{equation}\tag{\theequation}\label{#1}} 

\newcommand{\linfuns}{\mathrm{LI}}

\newcommand{\todo}[1]{\textcolor{red}{#1}}
\newcommand{\note}[1]{}

\makeatletter
\def\th@plain{%
  \thm@notefont{}
  \itshape 
}
\def\th@definition{%
  \thm@notefont{}
  \normalfont 
}
\def\th@remark{%
  \thm@notefont{}
  \thm@headfont{\itshape}
  \normalfont 
}
\makeatother

\begin{document}

\begin{frontmatter}



\title{Delay-Independent Stability of Nonlinear Delay Differential Equations via Isospectral Reduction}

\author[1]{Quinlan Leishman\corref{cor1}}
\cortext[cor1]{Corresponding author}
\ead{quinlan.leishman@math.byu.edu}
\author[1]{Benjamin Webb} 
\ead{bwebb@math.byu.edu}

\affiliation[1]{organization={Department of Mathematics},
            addressline={Brigham Young University}, 
            city={Provo},
            postcode={84602}, 
            state={Utah},
            country={United States}}

\begin{abstract}
Time delays arise naturally in a wide range of natural and technological systems, yet their influence on the stability remains a challenge to characterize, particularly for nonlinear systems. In this paper, we {develop} a stability framework that yields a delay-independent criterion for global exponential stability in a broad class of nonlinear, nonautonomous delay differential equations. Our approach is based on a novel method that associates the delayed system with a sequence of finite-dimensional matrices of increasing size, which are analyzed using the graph-theoretic technique of isospectral reduction. In contrast to most existing results for nonlinear delay differential equations, which rely on Lyapunov-based methods, our framework provides a general and computationally efficient alternative. As an application, we apply this criterion to analyze consistency in delayed reservoir computing systems, illustrating how the proposed approach can be used to assess stability properties relevant to prediction tasks.
\end{abstract}

\begin{keyword}



delay differential equations \sep time delays \sep stability \sep isospectral reduction \sep reservoir computing

\end{keyword}

\end{frontmatter}



\section{Introduction}

Delay differential equations (DDEs) arise naturally in the modeling of many natural and technological systems, where non-negligible processing or transmission times introduce dependence on past states. Such delays occur in a wide range of settings, including controlled systems \cite{LogemannTownley1996}, transportation networks \cite{TREIBER200671}, economic markets \cite{BelairMackey1989}, supply chains \cite{RiddallsBennett2002}, population dynamics \cite{Zhao2017}, machining processes \cite{StoneCampbell2004}, biological systems \cite{VIELLE1998105,Rihan2021biologyapplications}, and, more recently, machine learning architectures \cite{tavakoli2024delayedrc}. In many of these applications, the underlying delays are not constant but instead vary over time due to changing operating conditions, environmental influences, or adaptive system behavior.

In this paper, we consider delay differential equations of the form
\begin{equation}\label{eqn:dde}
x'(t)=f\bigl(t, x(t),x(t-h_1(t)),\ldots,x(t-h_r(t))\bigr), 
\quad x(t)=\phi(t)\ \text{for } t\leq 0,
\end{equation}
where $x(t)\in \mathbb{R}^n$ or $\in\mathbb{C}^n$ and the functions $h_i(t)$, $i=1,\ldots,r$, represent delays that are potentially time-dependent. The presence of these delays can significantly affect system dynamics and is often a source of instability and degraded performance.

A central problem in the analysis of \eqref{eqn:dde} is the stability of its \emph{equilibrium points} where an equilibrium point $x_0\in\mathbb{R}^n$ satisfies $f(t,x_0,\ldots,x_0)=0$ for all $t$. In this work, we focus on \emph{global exponential stability}: an equilibrium $x_0$ is globally exponentially stable if there exist constants $C,\gamma>0$ such that every solution $x(t)$ of  satisfies
\[
\|x(t)-x_0\|\leq C e^{-\gamma t}\sup_{s\leq 0}\|x(s)-x_0\|, 
\quad t\geq 0.
\]

Our primary interest is in \emph{delay-independent stability}, that is, conditions under which stability holds for a prescribed class of delay functions $h_1(t),\ldots,h_r(t)$. This notion is of particular interest in applications, as it ensures stability without requiring detailed knowledge of the delays themselves. Systems satisfying delay-independent stability are therefore inherently resilient to variations in delay arising from internal dynamics or external perturbations.
 
Much of the existing literature on the stability of delay differential equations relies on Lyapunov-based methods, including Lyapunov–Krasovskii functionals and Razumikhin techniques, to obtain sufficient stability conditions \cite{mahto2020,alexandrova2023,alexandrova2018,diab2024,chen2018} While these methods are quite powerful and widely applicable, they typically require the construction of system-specific functionals and can be difficult to devise for general nonlinear, nonautonomous systems. Moreover, the resulting conditions often depend explicitly on the form or magnitude of the delays.

In contrast, the approach developed in this work does not rely on Lyapunov constructions. Instead, we associate the delay differential equation with a sequence of finite-dimensional systems and analyze their structure using graph-theoretic techniques, yielding delay-independent conditions that are both general and computationally tractable.

The main contribution of this paper is a sufficient condition guaranteeing global exponential stability of \eqref{eqn:dde} that is independent of the delays, under general assumptions on the nonlinear function $f$ and for all continuous, bounded delay functions (see Main Result~\ref{mr1}). In addition, we derive an explicit bound on the exponential convergence rate of the system (see Main Result~\ref{mr2}). Finally, we show that when this condition holds, the system exhibits a strong form of asymptotic synchronization: in the absence of an equilibrium, any two trajectories converge exponentially to the another, implying asymptotic independence to initial conditions (see Main Result~\ref{mr3}).

Our results rely on the following matrices. For the function \(f = f(t, x, y_1, \ldots, y_r)\), let \(D_x f\) and \(D_{y_1} f, \ldots, D_{y_r} f\) denote the matrices of partial derivatives of \(f\) with respect to each argument. We define the \emph{stability matrices}
\begin{equation}\label{eqn:stabmat}
M_0 = \sup_{t \ge 0,\, x \in \mathcal{F}^n} \mathrm{abs}^*(D_x f),
\qquad
M_i = \sup_{t \ge 0,\, x \in \mathcal{F}^n} \abs{D_{y_i} f},
\quad 1 \le i \le r,
\end{equation}
where the suprema are taken entrywise. For a matrix \(A \in \mathbb{C}^{n \times n}\), let \(\abs{A}\) denote the entrywise absolute value of \(A\), and let \(\mathrm{abs}^*(A)\) denote the matrix whose diagonal entries are \(\Re(A_{ii})\) and whose off-diagonal entries are \(\abs{A_{ij}}\). Finally, for a square matrix \(A\), let
\[
\alpha(A) = \sup \Re \sigma(A)
\]
denote the \emph{spectral abscissa} of \(A\), i.e., the largest real part among its eigenvalues.

\begin{mainresult}\textbf{(Delay Independent Stability)}\label{mr1}
Suppose $f$ has the fixed point $x_0\in\mathbb{R}^n$. If the stability matrices in Equation \eqref{eqn:stabmat} satisfy the condition
\begin{equation}\label{eqn:stabcond}
        \alpha\paren{
            M_0+\cdots+M_r
        }<0,
\end{equation}
then $x_0$ is  globally exponentially stable for any set of continuous bounded time delays $h_i(t)$ for $i=1,\dots r$.
\end{mainresult}   

\begin{mainresult}\textbf{(Rate of Convergence)}\label{mr2}
Suppose $f$ has the fixed point $x_0\in\mathbb{R}^n$ and that condition \eqref{eqn:stabcond} holds. If the time delay $h_i(t)\leq T$ for each $i=1,\dots,r$ then 
    \[\norm{x(t)-x_0}\leq Ce^{-\gamma t}\sup_{s\leq 0}\norm{\phi(s)-x_0}\] 
for any solution $x(t)$ of Equation \eqref{eqn:dde}, for any $0<\gamma<\gamma_0$, where $\gamma_0$ is given by
    \begin{equation}\label{eqn:growth-rate2}
    \gamma_0=\inf\set{\gamma\in \R:\,\det\paren{\gamma I+\,M_0+e^{\gamma T}\sum_{i=1}^rM_i}=0
    }>0.
    \end{equation}
\end{mainresult}

\begin{mainresult}\textbf{(Independence to Initial Conditions)}\label{mr3}
Suppose $f$ has continuous, bounded delays \(h_i(t)\) for \(i = 1, \ldots, r\) and satisfies condition~\eqref{eqn:stabmat}. Then system~\eqref{eqn:dde} exhibits \emph{exponential independence of initial conditions}: for any solutions \(x_1(t)\) and \(x_2(t)\) with initial conditions \(\phi_1\) and \(\phi_2\),
\[
\|x_1(t) - x_2(t)\| \le C e^{-\gamma t} \sup_{s \le 0} \|\phi_1(s) - \phi_2(s)\|
\]
for any $0<\gamma<\gamma_0$, where $\gamma_0$ is given by~\eqref{eqn:growth-rate2},
and hence $\|x_1(t) - x_2(t)\|\rightarrow 0$ exponentially as $t\rightarrow \infty$.
\end{mainresult}


These results offer two principal advances over existing work. First, they establish global stability for a broad class of nonlinear systems. In contrast, most previous results  on delay-independent stability are restricted to linear systems and extend to nonlinear dynamics only locally via linearization around equilibria. Second, our results guarantee exponential stability, whereas prior studies have primarily focused on asymptotic stability. For example, Theorem 3.1 in~\cite{sun2012delayindep} treats asymptotic stability for switched linear delay systems. In addition, our approach yields an explicitly computable upper bound on the exponential rate of convergence.

Our approach for establishing exponential stability for systems given by Equation~\eqref{eqn:dde} builds on a sequence of results for discrete-time delayed dynamical systems~\cite{Webb2013, Reber_2020, Carter_2022}. In~\cite{Webb2013}, Bunimovich and Webb introduced the notion of \emph{intrinsic stability} for nonlinear discrete-time systems with delays, showing that intrinsic stability implies global exponential stability independent of the choice of delays. Their analysis relies on a graph-theoretic technique known as \emph{isospectral reduction}. These methods were subsequently extended to systems with time-varying delays in~\cite{Reber_2020} and to stochastically varying delays in~\cite{Carter_2022}. In the present work, these techniques are adapted and extended for the first time to the continuous-time setting, which allow us to establish Main Results~\ref{mr1}–\ref{mr3}. We therefore refer to the stability described in these results as \emph{intrinsic stability}. 

We further show that the proposed framework extends beyond global stability. 
In particular, the same techniques yield local stability results (see \Cref{thm:mr-corollary3}) and, in certain settings, can be used to establish the existence of fixed points or periodic trajectories (see \Cref{thm:mr-corollary1,thm:mr-corollary2}). 
We also generalize our main results to more complex delays in Section~\ref{sec:discretization}, allowing, for example, delays with certain types of discontinuities.

As an application of the theory, Section~\ref{sec:application} considers reservoir computers, a class of machine learning methods designed to learn the underlying dynamics of a system from time-series data. A key property associated with strong performance in this setting is \emph{consistency}~\cite{consistency2019}. Recent work has shown that incorporating time delays into reservoir dynamics can further enhance performance~\cite{tavakoli2024delayedrc}. Building on these observations, we show that, under mild assumptions, delayed reservoirs exhibit consistency even when the delays are allowed to vary in time (see Theorem~\ref{thm:consistency-delayed-2}).

The paper is organized as follows. In Section~\ref{sec:examples}, we present several illustrative examples and applications of Main Results~\ref{mr1}--\ref{mr3}, including a case demonstrating the necessity of the bounded-delay assumption. 
Sections~\ref{sec:comparison}--\ref{sec:gsr-bound} are devoted to the proofs of Main Results~\ref{mr1}--\ref{mr3}. 
In Section~\ref{sec:comparison}, we relate the behavior of the DDE~\eqref{eqn:dde} to that of its global linearization (see \Cref{thm:bound-by-linear}). 
Section~\ref{sec:discretization} analyzes this linearization for a subclass of time delays and extends the results to more general types of delays. In Section~\ref{sec:gsr-bound}, we study the asymptotic behavior of a sequence of matrices associated with the global linearization, completing the proofs of the main results. In Section~\ref{sec:corollaries}, we apply these results to establish criteria for local stability and the existence of fixed points and limit cycles. Section~\ref{sec:application} presents an application to consistency in delayed reservoir computing. We conclude by outlining several directions for future work.

\section{Examples and Applications}\label{sec:examples}

Time delays typically slow convergence and may destabilize otherwise stable systems. We illustrate these effects with two examples and demonstrate how our results apply.

\begin{figure}
\center
\begin{overpic}[width=0.9\textwidth]{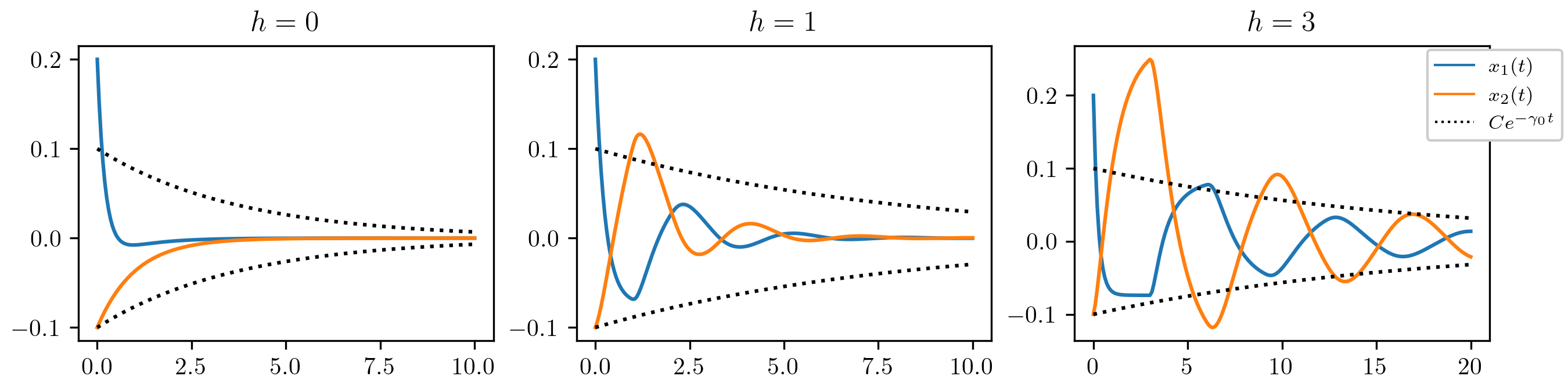}
\put(16,-3){(a)}
\put(48,-3){(b)}
\put(80,-3){(c)}
\end{overpic}
\vspace{0.25in}
\\
\begin{overpic}[width=0.8\textwidth]{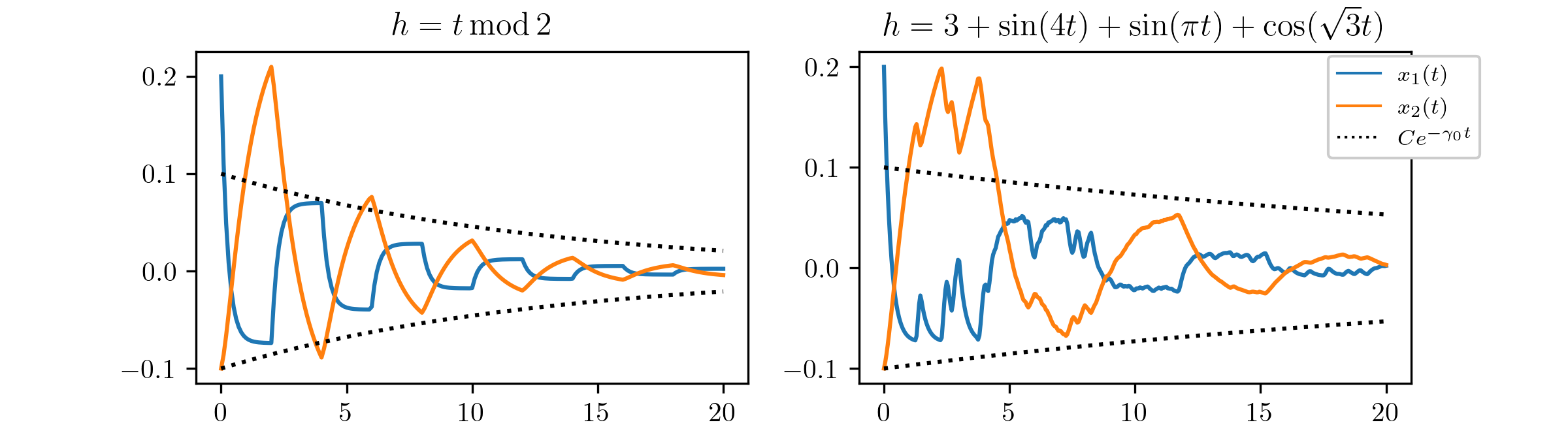}
\put(28,-3){(d)}
\put(71,-3){(e)}
\end{overpic}
\vspace{0.1in}
\caption[Intrinsic stability example with constant delays]{Trajectories of the intrinsically stable system \eqref{eqn:is-system} for several choices of time delay $h$ are shown, together with the asymptotic bound predicted by \Cref{mr2}. Panels (a)–(c) show constant delays: (a) $h=0$ (no delay), (b) $h=1$, and (c) $h=3$. Panel (d) shows a periodic delay $h(t)=t \bmod 2$, while panel (e) shows an aperiodic delay $h(t)=3+\sin(4t)+\sin(\pi t)+\cos(\sqrt{3}\,t)$. In all cases, the fixed point $(0,0)\in\mathbb{R}^2$ remains exponentially stable.
}\label{fig:is-system}
\end{figure}

\begin{example}[\textbf{Intrinsic Stability}]\label{ex:is-system}
Consider the DDE
\begin{equation}\label{eqn:is-system}
x'(t)=Ax(t)+\sin(Bx(t-h)),
\quad A=\pmat{-4 & 0 \\ -1 & -1},\quad B=\pmat{-1 & 1 \\ 1 & 0},
\end{equation}
which has a fixed point at $(0,0)\in\mathbb{R}^2$. Figure~\ref{fig:is-system}(a)–(c) shows that increasing constant delays only slow convergence while preserving stability. Figures~\ref{fig:is-system}(d)–(e) illustrate the effects of periodic and aperiodic time-varying delays. Although the delay in (d) is discontinuous, this case is covered by the extension discussed at the end of Section~\ref{sec:discretization}.

Setting $f(t,x,y)=Ax+\sin(By)$, we compute
\[
D_xf(t,x,y)=A,\quad
D_yf(t,x,y)=\cos(By)^T\odot B,
\]
where $\odot$ denotes the componentwise product. Since
\[
M_0+M_1=\mathrm{abs}^*(A)+\abs{B}=\pmat{-3 & 1 \\ 2 & -1},
\quad \alpha(M_0+M_1)=-2+\sqrt{3}<0,
\]
the system is intrinsically stable and {therefore} globally exponentially stable by Main Result~\ref{mr1}. {Importantly,} this verification is no more complex than analyzing the stability of the undelayed system.
\end{example}

Our second example is adapted from the example in \cite[Section 4]{Louisell2001}.

\begin{example}[\textbf{Non-Intrinsic Stability}]\label{ex:nis-system}
Consider the nonlinear DDE
\begin{equation}\label{eqn:nis-system}
x'(t)=Ax(t)+\sin(Bx(t-h)),
\quad A=\pmat{-1 & 0 \\ 0 & -1},\quad
B=\pmat{-5/4 & 1/4 \\ 1/4 & -5/4},
\end{equation}
which has an equilibrium at $(0,0)\in\mathbb{R}^2$. Standard methods {guarantee} stability only for delays $h\in[0,2.0576)$ (see \cite{Louisell2001,GuStabilityBook}), {and do not provide delay-independent guarantees.} 

Figure~\ref{fig:nis-system}(a)–(c) illustrates that increasing constant delays beyond this regime {(from $h=1$ to $h=3$) leads to loss of stability. Figure~\ref{fig:nis-system}(d) shows a similar destabilization under time-varying delays $h(t)=t \bmod 2$, despite stability holding for each fixed value $h\leq 2$.}

{This behavior can be understood in terms of the intrinsic stability condition:} the associated stability matrices satisfy
\[
M_0+M_1=\mathrm{abs}^*(A)+|B|=\pmat{1/4 & 1/4 \\ 1/4 & 1/4},
\]
for which $\alpha(M_0+M_1)=1/2>0$. Hence condition~\eqref{eqn:stabcond} fails, {and the system is not intrinsically stable.}
\end{example}

\begin{figure}
\center
\begin{overpic}[width=0.9\textwidth]{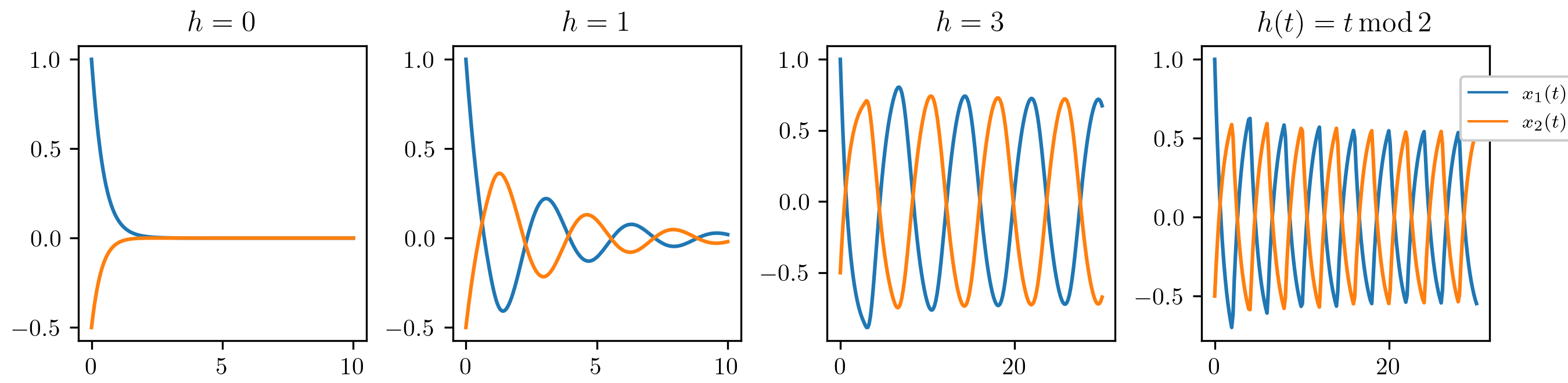}
\put(12,-3){(a)}
\put(36,-3){(b)}
\put(60,-3){(c)}
\put(84,-3){(d)}
\end{overpic}
\vspace{0.1in}
\caption[Intrinsic stability non-example]{Trajectories of the non-intrinsically stable system \eqref{eqn:nis-system} {are shown for constant and time-varying delays} $h$. Frames {(a)–(c) show constant delays: (a) $h=0$, (b) $h=1$, and (c) $h=3$.} For small delays the system remains stable, while larger delays lead to destabilization. Frame (d) shows a periodic time-varying delay $h(t)=t \bmod 2$. Although the system is stable for every fixed $h\leq 2$, this time-varying delay also {induces instability}.
}\label{fig:nis-system}
\end{figure}

Example~\ref{ex:nis-system} highlights a setting {where} classical delay-dependent criteria are {insufficient} \textcolor{black}{to describe the behavior under varying delays}. {In contrast,} the intrinsic stability framework provides a delay-independent characterization that applies to both constant and time-varying delays.

Our third example illustrates that for systems satisfying Equation \eqref{eqn:stabcond}, exponential stability may fail to hold if the time delays are unbounded.
\begin{example}[\textbf{Unbounded Time-Delays}]\label{ex:unbounded}
   Consider the linear DDE
\begin{equation}
x'(t)=-x(t)+\frac12 x(t-h(t)),\quad t\geq 1,
\end{equation}
with unbounded delay
\[
h(t)=t-\Big(\sqrt{t}+\log\!\Big(\frac{2\sqrt{t}}{2\sqrt{t}-1}\Big)-\log(2)\Big)^2.
\]

The system satisfies $M_0=[-1]$ and $M_1=[1/2]$, {so condition~\eqref{eqn:stabcond} holds. However,} it does not satisfy the hypotheses of Main Result~\ref{mr1} since $\sup_{t\ge 0} h(t)=\infty$. \textcolor{black}{Moreover,} the equilibrium $0$ is not exponentially stable.
{Indeed}, $x(t)=e^{-\sqrt{t}}$ is a solution, since
\begin{align*}
-x(t)+\frac12 x(t-h(t))
&= -e^{-\sqrt{t}}
+ \frac12 \exp\!\Big(-\sqrt{t}-\log\!\Big(\frac{2\sqrt{t}}{2\sqrt{t}-1}\Big)+\log 2\Big) \\
&= -e^{-\sqrt{t}} + e^{-\sqrt{t}}\Big(1-\frac{1}{2\sqrt{t}}\Big)
= -\frac{1}{2\sqrt{t}}e^{-\sqrt{t}} = x'(t).
\end{align*}

Since this solution decays {only} sub-exponentially, the equilibrium is not exponentially stable. (We {note} that earlier results of Sun~\cite{sun2012delayindep} imply that the equilibrium is globally asymptotically stable.)
\end{example}

\section{Global Linearization of Nonlinear Systems}\label{sec:comparison} 
In this section, we show that the growth rate of a nonlinear system can be bounded by that of an associated linear system, which we call the \emph{global linearization} (see Equation~\eqref{eqn:dde-linearized} and Theorem~\ref{thm:bound-by-linear}). This linear system serves as a proxy for determining \emph{intrinsic stability}, namely, whether the equilibrium is globally exponentially stable for all continuous bounded time delays. The resulting comparison principle is the main tool in the proof of Main Results~\ref{mr1}--\ref{mr3}.

\begin{definition}[Global linearization]
   To the nonlinear equation \eqref{eqn:dde} we associate the following linear DDE:
\begin{equation}\label{eqn:dde-linearized}
        y'(t)=M_0y(t)+\sum_{i=1}^r M_iy(t-h_i(t)),
    \end{equation}
where the $M_i$ are the stability matrices given by Equation \eqref{eqn:stabmat}.
We refer to Equation \eqref{eqn:dde-linearized} the \emph{global linearization} of Equation \eqref{eqn:dde}. 
\end{definition}

We begin our analysis of Equation \eqref{eqn:stabmat} by introducing some notation. First, the notion of a flow map will be useful. For any fixed $t_1$, any initial condition $x(t)=\phi(t-t_1)$ for $t\in(-\infty,t_1]$ defines a unique solution $x$ of Equation \eqref{eqn:dde} for $t\geq t_1$.
This induces a (non-autonomous) flow on the space $C((-\infty,0];\R^d)$, with flow map $S_{[t_1,t_2]}^h$ defined by
\[
S_{[t_1,t_2]}^h[\phi](s)= x(t_2+s)
\]
for $s\in(-\infty,0]$.
This corresponds to advancing an initial condition on $(-\infty,t_1]$ to a solution on the interval $(-\infty,t_2]$.
We will denote the analogous flow map for the global linearization \eqref{eqn:dde-linearized} as $R_{[t_1,t_2]}^h$.
For brevity, we will also use the shortened notation $S^h_t:=S^h_{[0,t]}$ and $R^h_t:=R^h_{[0,t]}$.
Note that as Equation \eqref{eqn:dde-linearized} is linear, $R_t^h$ is in fact a bounded linear operator on $C((-\infty,0])$. 
Since $M_0$ is a Metzler matrix and $M_1,\ldots,M_r$ are nonnegative, it is a standard result that solutions to Equation \eqref{eqn:dde-linearized} with nonnegative initial conditions will be nonnegative for all time, or equivalently that $R_t^h$ is a nonnegative operator; see e.g. \cite[Lemma 3]{liu2011delayindep}.

Occasionally it will be useful to regard these flow maps as mapping between certain other spaces. 
In particular, observe that both of these flow maps also map $C^{0,1}((-\infty,0])$ into itself. If the time delays are bounded as $h_i(t)\leq T$, then these may moreover be regarded as maps on the spaces $C([-T,0])$ or on $C^{0,1}([-T,0])$.

Second, we introduce a convention for notating multiple time delays.
For a function $x(t):\R\to \F^d$ and time delays $h$, let $\Delta_h x:\R\to \F^{d\times r}$ be given by $\Delta_h x(t)=(x(t-h_1(t)),\ldots,x(t-h_r(t)))$.
With this notation, we may write the DDE \eqref{eqn:dde} in the more compact form
\begin{equation*}\tag{\ref{eqn:dde}$'$}\label{eqn:dde-short}
x'(t)=f(t,x(t),\Delta_hx(t)).
\end{equation*}
For a fixed vector of constant time delays $v\in \R^n$ and a function $\phi$, we will also use this notation to mean $\Delta_{v}\phi=(\phi(-v_1),\ldots,\phi(-v_r))$.

Finally, the standard theory of continuous dependence of solutions on initial conditions and parameters for DDEs parallels that of ODEs. Since we will use these results throughout the paper, we collect them here for convenience.

\begin{proposition}\label{thm:continuity}
Let $T$ be such that $h_i(t)\leq T$ ($T=\infty$ if $h$ is unbounded), and denote $C^0=C([-T,0])$ and $C^{0,1}=C^{0,1}([-T,0])$
The following hold for some constants $C,L>0$:
\begin{enumerate}[label=\textup{(\roman*)}]
\item \note{Uniform continuity} \note{$t$-boundedness doesn't let us simplify to 1-arg versions as need continuity}
	$\displaystyle \norm{S_t^h[\phi_1]-S_t^h[\phi_2]}_{C^0}\leq e^{Lt}\norm{\phi_1-\phi_2}_{C^0}$
\item 
	$\displaystyle \Lip(S_t^h[\phi_1]-S_t^h[\phi_2])\leq \begin{cases}
	\Lip(\phi_1-\phi_2) + Ce^{Lt}\norm{\phi_1-\phi_2}_{C^0} & \text{$t < T$ or $T=\infty$}
	\\
	Ce^{Lt}\norm{\phi_1-\phi_2}_{C^0} & t \geq T
	\end{cases}$
\item
	$\displaystyle
	\norm{S_t^h[\phi_1]-S_t^h[\phi_2]}_{C^{0,1}}\leq Ce^{Lt}\norm{\phi_1-\phi_2}_{C^{0,1}}
	$
\item  $
	\norm{S^{g}_t[\phi]-S^{h}_t[\phi]}_{C^{0,1}}
	\leq 
	C e^{Lt}(1+\norm{\phi}_{C^{0,1}})\norm{g-h}_{L^\infty([0,t])}
	$
\end{enumerate}
\end{proposition}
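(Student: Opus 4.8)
The plan is to reduce every bound to Gronwall's inequality applied to the integrated form of \eqref{eqn:dde-short}, using that $f$ is globally Lipschitz. Fix a common Lipschitz constant $K$ of $f$ in its state arguments; since $\F^d$ and $\F^{d\times r}$ carry the $1$-norm, this gives $\norm{f(t,a,B)-f(t,a',B')}\le K\paren{\norm{a-a'}+\sum_{i=1}^r\norm{b_i-b_i'}}$, and I set $L=(r+1)K$ throughout, enlarging $L$ and $C$ later as convenient (the statement only asks for \emph{some} constants). One structural fact is used repeatedly: solutions are absolutely continuous, so the fundamental theorem of calculus applies and, for a $C^{0,1}$ function, $\Lip(\cdot)$ equals the essential supremum of its a.e.-defined derivative; this lets me ignore the corner at $t=0$ and the (at most countably many) jumps arising from discontinuities of $h$.

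For (i)--(iii) I would compare solutions $x_1,x_2$ with the \emph{same} delays $h$, and set $w=x_1-x_2$, $u(t)=\sup_{-T\le s\le t}\norm{w(s)}$. Integrating \eqref{eqn:dde-short} and applying the Lipschitz bound gives $\norm{w(t)}\le\norm{\phi_1(0)-\phi_2(0)}+L\int_0^t u(s)\,ds$; since the right side is nondecreasing and $u(0)=\norm{\phi_1-\phi_2}_{C^0}$, Gronwall yields $u(t)\le e^{Lt}\norm{\phi_1-\phi_2}_{C^0}$, which is (i) because $S_t^h[\phi_1]-S_t^h[\phi_2]$ on $[-T,0]$ is just $w$ on $[t-T,t]$. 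For (ii), differentiate: a.e.\ $\norm{w'(t)}=\norm{f(t,x_1(t),\Delta_hx_1(t))-f(t,x_2(t),\Delta_hx_2(t))}\le L\,u(t)\le Le^{Lt}\norm{\phi_1-\phi_2}_{C^0}$; taking the essential supremum over $[t-T,t]$ and splitting into the cases $t\ge T$ (the interval lies in $(0,\infty)$, so this bound applies directly) and $t<T$ (the interval meets $[-T,0]$, where $w=\phi_1-\phi_2$ contributes $\Lip(\phi_1-\phi_2)$) gives exactly the stated two-case estimate. Part (iii) follows by adding (i) and (ii) and using $\norm{\cdot}_{C^{0,1}}=\norm{\cdot}_{C^0}+\Lip(\cdot)$.

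Parts (iv)--(v) compare solutions $x=x^g$, $y=x^h$ with the \emph{same} initial condition $\phi$ but different delays. The new ingredient is an a priori bound on $y$ itself: from $\norm{f(t,y(t),\Delta_hy(t))}\le M_f+L\sup_{-T\le s\le t}\norm{y(s)}$ with $M_f=\sup_t\norm{f(t,0,\ldots,0)}<\infty$ (this is where $t$-boundedness of $f$ enters), Gronwall gives $\sup_{-T\le s\le t}\norm{y(s)}\le C(1+\norm{\phi}_{C^{0,1}})e^{Lt}$ and hence, via $y'=f(\cdot,y,\Delta_hy)$, also $\Lip(y|_{[t-T,t]})\le C(1+\norm{\phi}_{C^{0,1}})e^{Lt}$. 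Now set $w=x-y$ and insert the intermediate value $f(t,y(t),\Delta_gy(t))$, so that a.e.
\[
\norm{w'(t)}\le L\sup_{0\le s\le t}\norm{w(s)}+K\sum_{i=1}^r\norm{y(t-g_i(t))-y(t-h_i(t))}\le L\sup_{0\le s\le t}\norm{w(s)}+\Lip(y)\norm{g-h}_{L^\infty([0,t])},
\]
the last step using $\sum_i\abs{g_i(t)-h_i(t)}=\norm{g(t)-h(t)}$ and that $w\equiv 0$ on $[-T,0]$. Integrating and applying Gronwall with the nondecreasing forcing term $\tfrac1L\Lip(y)\norm{g-h}_{L^\infty([0,t])}e^{Lt}$ bounds $\sup_{[t-T,t]}\norm{w}$; feeding that back into the displayed inequality bounds $\sup_{[t-T,t]}\norm{w'}=\Lip(S^g_t[\phi]-S^h_t[\phi])$. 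Combining these with the a priori bound on $\Lip(y)$ and relabelling $L$ gives (iv). Finally (v) is immediate from (iv), since $1+\norm{\phi}_{C^{0,1}}\le 1+\norm{\phi}_{C^{0,1}}+\Lip(\phi)$.

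I do not anticipate a genuine obstacle: the whole proposition is a chain of Gronwall estimates. The points that need care are (a) working at the level of integral identities and a.e.\ derivatives, so that the corner at $t=0$ and the discontinuities of $h$ cause no trouble; (b) the systematic $t<T$ versus $t\ge T$ case split, which is unavoidable because $S_t^h[\phi]$ still ``remembers'' the initial interval when $t$ is small; and (c) the a priori growth bounds on $\norm{y}$ and $\Lip(y)$ used in (iv)--(v), which is the only place the $t$-boundedness hypothesis on $f$ is needed.
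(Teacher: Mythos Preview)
Your proposal is correct and follows essentially the same approach as the paper: Gronwall's inequality applied to the integrated equation for (i), the pointwise derivative bound with a $t<T$/$t\ge T$ split for (ii), and for (iv)--(v) an a~priori growth estimate on the solution (using $t$-boundedness of $f$) followed by the same triangle-inequality insertion of an intermediate delayed term and another Gronwall step. Your observation that (v) is immediate from (iv) is a small streamlining the paper does not make explicit; otherwise the arguments coincide.
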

The proof of \Cref{thm:continuity} is in Appendix A.
The analogous statements also hold for $R_t^h$.
Note that (iv) in particular implies $R_t^h$ varies continuously in the $C^{0,1}$-operator norm topology as $h$ varies in the $L^\infty$-norm.

We now turn to the main results of this section, whose goal is to establish a connection between the nonlinear system \eqref{eqn:dde} and its global linearization \eqref{eqn:dde-linearized}.

\begin{theorem}\label{thm:bound-by-linear}
Suppose that $f$ is $C^1$ with bounded first derivatives, and that the vector of time delays $h(t)$ is a regulated function.
Then, for any solutions $x_1(t),x_2(t)$ of the nonlinear system \eqref{eqn:dde}, if $y(t)$ is the solution to the global linearization \eqref{eqn:dde-linearized} with initial condition $y(t)=\abs{x_1(t)-x_2(t)}$ for $t\leq 0$, then for all $t\in \R$,
\begin{equation}\label{eqn:bound-by-linear1}
    \abs{x_1(t)-x_2(t)} \leq y(t).
\end{equation}
\end{theorem}

This theorem {provides a bound on} solutions of Equation~\eqref{eqn:dde} {in terms of} solutions of the global linearization (see Equation \eqref{eqn:dde-linearized}). In particular, it allows us to show that stability of the global linearization implies Main Results~\ref{mr1}--\ref{mr3} (see Section~\ref{sec:discretization}).

Recall that a function is \emph{regulated} if it is in the $L^\infty$-closure of step functions.
Stated in terms of the flow maps $S_t^h$ and $R_t^h$, Equation \eqref{eqn:bound-by-linear1} may be written as
\begin{equation}\label{eqn:bound-by-linear2}
    \abs{S^h_t[\phi_1]-S^h_t[\phi_2]}\leq R^h_t[\abs{\phi_1-\phi_2}]
\end{equation}
for any initial conditions $\phi_1,\phi_2$.
Equation \eqref{eqn:bound-by-linear2} may be interpreted as stating $R_t^h$ acts as a ``linear absolute value'' for the flow map $S_t^h$.

The proof of \Cref{thm:bound-by-linear} relies on two technical lemmas.
In the first we analyze the case that the time delays $h(t)=h$ are constant. To do so, we let $R_{[t_1,t_2]}^{h,\delta}$ denote the flow map for Equation \eqref{eqn:dde-linearized} for $\delta\geq 0$ with $M_0$ replaced by $M_0+\delta I$. In order to assist in the proof, for $\tau>0$ small we furthermore introduce the notation for two discretized flow maps using
\[
S^\tau_t[\phi](-s)=\begin{cases}
\phi(-s+\tau) & -s\leq -\tau
\\
\phi(0)+(\tau-s)f(t, \phi(0),\Delta_h\phi) & -\tau < -s \leq 0
\end{cases}
\]
and
\[
R^{\tau,\delta}[\phi](-s)=\begin{cases}
\phi(-s+\tau) & -s\leq -\tau
\\
\phi(0)+(\tau-s)\paren{
(
{M}_0+\delta I) x(0)
+
\sum_{i=1}^r {M}_i \phi(-h_i)
}
 & -\tau < -s \leq 0
\end{cases}.
\]
These are forward-Euler approximations to $S_{[t,t+\tau]}^h$ and $R_{[t,t+\tau]}^{h,\delta}$, respectively.
Similar to the actual flow maps, the discretized flow maps $S^\tau_t$ and $R^{\tau,\delta}$ both map $C^{0,1}((-\infty,0])$ into itself, and $R^{\tau,\delta}$ is a bounded linear operator.

\begin{lemma}\label{thm:comparison-small-advance-bound}
Suppose the time delays $h\in \R^r$ are constant.
There exist constants $C_1,C_2$ independent of $h$ such that for any Lipschitz continuous $\phi\in C((-\infty,0];\R^d)$ and all sufficiently small $\tau>0$,
\[
\norm{S^h_{[t,t+\tau]}[\phi]-S^\tau_t[\phi]}_{C^{0}}\leq C_1\tau^2\paren{1+\norm{\phi}_{C^0}+\Lip(\phi)},
\]
\[
\norm{R^{h,\delta}_{[t,t+\tau]}[\phi]-R^{\tau,\delta}[\phi]}_{C^{0}}\leq C_2\tau^2\paren{1+\norm{\phi}_{C^0}+\Lip(\phi)}.
\]
\end{lemma}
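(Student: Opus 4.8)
The plan is to treat both estimates as an instance of the same Taylor-expansion argument applied to the integral form of the solution over a single step $[t,t+\tau]$, and to control the remainder using the Lipschitz continuity and $t$-boundedness of $f$ together with the a priori growth bounds from \Cref{thm:continuity} and \Cref{thm:continuity-linear}. Since the linear DDE \eqref{eqn:linearized} is a special case of \eqref{eqn:dde} with $\widehat f(t,x,y)=M_0 x+\sum_i M_i y_i$, the second inequality follows from the first once we observe that $\widehat f$ is globally Lipschitz with constant $\le \norm{\mathcal M}$ and satisfies $\widehat f(t,0,0)=0$; so I would prove the first bound in detail and then indicate the specialization. First I would write, for $s\in(-\tau,0]$ and with $x(u)$ the solution started from $\phi$ at time $t$,
\[
S^h_{[t,t+\tau]}[\phi](s)=\phi(0)+\int_t^{t+\tau+s} f\paren{u,x(u),\Delta_h x(u)}\,du,
\]
while $S^\tau_t[\phi](s)=\phi(0)+(\tau+s)f(t,\phi(0),\Delta_h\phi)$. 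For $s\le-\tau$ both maps equal $\phi(s+\tau)$, so only the upper branch matters. Subtracting, the difference is the integral of $f(u,x(u),\Delta_h x(u))-f(t,\phi(0),\Delta_h\phi)$ over an interval of length $\tau+s\le\tau$.

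The key step is then to bound the integrand pointwise by $C\tau(1+\norm{\phi}_{C^0}+\Lip(\phi))$ for $u\in[t,t+\tau]$. Using Lipschitz continuity of $f$ in all variables,
\[
\norm{f(u,x(u),\Delta_h x(u))-f(t,\phi(0),\Delta_h\phi)}
\le L_0\paren{\abs{u-t}+\norm{x(u)-\phi(0)}+\norm{\Delta_h x(u)-\Delta_h\phi}}.
\]
Here $\abs{u-t}\le\tau$. For the middle term, $\norm{x(u)-\phi(0)}\le\int_t^u\norm{f(\cdot)}\le\tau\paren{L_1+L_0(r+1)\norm{S^h_{[t,u]}[\phi]}_{C^0}}$, and \Cref{thm:continuity}(i) (applied with the shifted-time version, or rather its proof, which gives $\norm{S^h_{[t,u]}[\phi]}_{C^0}\le e^{L\tau}\norm{\phi}_{C^0}+$ a term linear in $\tau$ from the inhomogeneity $L_1$) bounds this by $C\tau(1+\norm{\phi}_{C^0})$ for $\tau$ small. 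For the delay term, since the $h_i$ are \emph{constant}, $\Delta_h x(u)-\Delta_h\phi$ has $i$-th block $x(u-h_i)-\phi(-h_i)$; when $u-h_i\ge t$ this is again an increment of the solution over a time of length $\le\tau$, bounded as above, and when $u-h_i<t$ it is $\phi(u-t-h_i)-\phi(-h_i)$, of size $\le\Lip(\phi)\,\abs{u-t}\le\Lip(\phi)\tau$. In all cases the integrand is $\le C\tau(1+\norm{\phi}_{C^0}+\Lip(\phi))$, and integrating over a length-$\tau$ interval yields the $C_1\tau^2$ bound uniformly in $s$, hence in the $C^0$ norm. Crucially every constant here depends only on $L_0$, $L_1$, $r$, $T$ and not on $h$, because the bounds in \Cref{thm:continuity} are $h$-independent.

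For the second inequality I would repeat the computation with $f$ replaced by $\widehat f$: now $L_0$ is replaced by $\norm{\mathcal M}$ and $L_1=0$, and $R^\tau[\phi](s)$ is exactly the first-order Euler step $\phi(0)+(\tau+s)\widehat f(x(0),\Delta_h\phi)$ (note the typo $\phi(1)/x(0)$ in the displayed definition should read $\phi(0)$), so the same estimate gives $\norm{R^h_{[t,t+\tau]}[\phi]-R^\tau[\phi]}_{C^0}\le C_2\tau^2(1+\norm{\phi}_{C^0}+\Lip(\phi))$, with the $\norm{\phi}_{C^0}$ term in fact improvable since $\widehat f$ is homogeneous, though we do not need that. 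The main obstacle is bookkeeping rather than conceptual: I must be careful that the a priori bounds I invoke from \Cref{thm:continuity} are applied over the short interval $[t,t+\tau]$ (so that their $e^{Lt}$ factors become $e^{L\tau}\le 2$ for $\tau$ small) and that the case split $u-h_i \gtrless t$ in the delay term is handled, using $\Lip(\phi)$ precisely on the part of the interval where the delayed argument still lies in the initial data. No genuine difficulty arises from the constancy of the delays — indeed that is what makes $\Delta_h$ behave well here; the extension to regulated $h$ is deferred to the subsequent lemmas in the paper.
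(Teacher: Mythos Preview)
Your proposal is correct and follows essentially the same approach as the paper: both write the difference as an integral of $f(u,x(u),\Delta_h x(u))-f(t,\phi(0),\Delta_h\phi)$ over an interval of length $\le\tau$, bound the integrand by $O(\tau)(1+\norm{\phi}_{C^0}+\Lip(\phi))$ via Lipschitz continuity of $f$ and the a~priori growth estimates, and then integrate. The only stylistic difference is that the paper packages the increment bounds on $x(\xi)-x(0)$ and $\Delta_h x(\xi)-\Delta_h x(0)$ into a single quantity $\Lip_{[-T,\tau]}(x)\le \Lip(\phi)+Ce^{L\tau}\norm{\phi}_{C^0}$, thereby avoiding your explicit case split on whether $u-h_i$ falls in the initial data or in the solution interval; this is slightly cleaner bookkeeping but not a different argument.
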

\begin{proof}
We first compare $S_{[t,t+\tau]}[\phi](-s)$ and $S^\tau_t[\phi](-s)$.
Note that these functions are identical on $[-T,-\tau]$, so it suffices to consider $-s\in (-\tau,0]$.

Let $L=\Lip(f)$, and let $x(t)=S_t^h[\phi](0)$ be the solution to \eqref{eqn:dde} with initial condition $\phi$.
Then, we have
\begin{align*}
&\norm{S^h_{[t,t+\tau]}[\phi](-s)-S^\tau_t[\phi](-s)}
\\&\quad=
\norm{
(\tau - s)f(t, x(0),\Delta_hx(0))
-
\int_0^{\tau - s}
f(t+\xi,x(\xi),\Delta_hx(\xi))\,d\xi
}
\\
&\quad\leq
\int_0^{\tau - s}
\norm{
f(t+\xi,x(\xi),\Delta_hx(\xi))
-f(t, x(0),\Delta_hx(0))
}
\,d\xi
\\
&\quad\leq
\tau\sup_{\xi\in[0,\tau]}
L\paren{
\xi+\norm{x(\xi)-x(0)}+\norm{\Delta_hx(\xi)-\Delta_hx(0)}
}
\\
&\quad\leq
L\tau^2
\paren{
1+(r+1)\Lip_{(-\infty,\tau]}(x)
},
\end{align*}
where $\Lip_{(-\infty,\tau]}(x)$ denotes the Lipschitz constant of $x(t)$ on the interval $(-\infty,\tau]$.
Using \Cref{thm:continuity} implies that
\[
\norm{S_{[t,t+\tau]}[\phi](-s)-S^\tau_t[\phi](-s)}
\leq
\tau^2
L\paren{
1+(r+1)Le^{Lt}\norm{\phi}_{C^0}+(r+1)\Lip(\phi)
}
\]\[
\leq C_1\tau^2\paren{1+\norm{\phi}+\Lip(\phi)}
\]
for a constant $C_1$ independent of $\phi$ and $h$.
The case for $R^{h,\delta}_{[t,t+\tau]}$ is proved identically.
\end{proof}
\begin{lemma}\label{thm:comparison-small-step-approx}
For any $\delta>0$, there is some $\tau_0$ depending on $\delta$ such that if $0\leq \tau < \tau_0$, then
\[
\abs{S^\tau_t[\phi_1]-S^\tau_t[\phi_2]}\leq R^{\tau,\delta}[\abs{\phi_1-\phi_2}]
\]
pointwise for all $t\geq 0$ and any $\phi_1,\phi_2$.
\end{lemma}
\begin{proof}
For brevity, for $u,v\in \R^n$ and $s\in[0,1]$ let $\mathcal I(u,v,s)=(1-s)u+sv$ be the linear path connecting $u$ and $v$.

Since $\abs{S^\tau_t[\phi_1]-S^\tau_t[\phi_2]}(-s)$ and $R^{\tau,\delta}[\abs{\phi_1-\phi_2}](-s)$ are both equal to $\abs{\phi_1-\phi_2}(\tau-s)$ for $-s\leq -\tau$, it suffices to consider $-\tau<-s\leq 0$.
For this case, we proceed by using the integral mean value theorem along a certain path between the points $(\phi_1(0),\Delta_h\phi_1(0))$ and $(\phi_2(0),\Delta_h\phi_2(0))$ in $\F^{d(r+1)}$.
Specifically, we will consider the concatenation $pq_1\cdots q_r$ of the paths given by
\[
p(s)=\parensize\Big{
\mathcal I\parensize\big{\phi_1(0),\phi_2(0),s},
\phi_1(-h_1),\ldots,\phi_1(-h_r)
},
\]\[
q_i(s)=\parensize\Big{
\phi_2(0),\phi_2(-h_1),\ldots,
\mathcal I\parensize\big{\phi_1(-h_i),\phi_2(-h_i),s},
\ldots,\phi_1(-h_r)
}
\]
for $0\leq s\leq 1$, $1\leq i \leq r$.
Hence for $-\tau<s\leq 0$ we may write
\begin{align*}
\hspace{1em}&\hspace{-1em}\abs{S^\tau_t[\phi_1](-s)-S^\tau_t[\phi_2](-s)}
\\&=
\abs{
\phi_1(0)-\phi_2(0)
+(\tau-s)\int_0^1
Df_x\parensize\big{p(\xi)}(\phi_1(0)-\phi_2(0))\,d\xi
}
\\&\quad\quad+
\abs{\sum_{i=1}^r
Df_{y_i}\parensize\big{q_i(\xi)}(\phi_1(-h_i)-\phi_2(-h_i))
\,d\xi
}
\\&\leq
\paren{\int_0^1\abssize\big{(I+ (\tau-s)Df_x(p(\xi))}\,d\xi}\abs{\phi_1(0)-\phi_2(0)}
\\&\quad\quad+(\tau-s)\sum_{i=1}^r M_i\abs{\phi_1(-h_i)-\phi_2(-h_i)}.
\eqlabel{eqn:comparison-small-step-approx-1}
\end{align*}
Noting that $|1+\tau b|=1+\tau \Re(b)+O(\tau^2)$, it is the case that $\abs{(I+ (\tau-s)Df_x}= I+(\tau-s) \mathrm{abs}^*(Df_x)+O(\tau^2)I$ for sufficiently small $\tau$.
Finally, observe that $\mathrm{abs}^*(Df_x)+O(\tau)I\leq M_0+\delta I$ if we choose $\tau$ small enough that the error term is uniformly smaller than $\delta$.
If we apply these inequalities to \eqref{eqn:comparison-small-step-approx-1}, the right-hand-side becomes exactly $R^{\tau,\delta}[\abs{\phi_1-\phi_2}](-s)$, as claimed.
\end{proof}

We are now ready to prove \Cref{thm:bound-by-linear}.
\begin{proof}[Proof of \Cref{thm:bound-by-linear}]
We first consider the case where $\phi_1$ and $\phi_2$ are Lipschitz continuous.
Note that then $|\phi_1-\phi_2|$ is also Lipschitz continuous, so by \Cref{thm:comparison-small-advance-bound} we may write
\[
\abs{S_{[t,t+\tau]}[\phi_1]-S_{[t,t+\tau]}[\phi_2]}
=
\abs{S_t^\tau[\phi_1]-S_t^\tau[\phi_2]} + E^1(\phi_1,\phi_2),
\]\[
R^\delta_{[t,t+\tau]}[\abs{\phi_1-\phi_2}]
=
R^{\tau,\delta}[\abs{\phi_1-\phi_2}]+E^2(\phi_1,\phi_2),
\]
where $E^1(\phi_1,\phi_2)$, $E^2(\phi_1,\phi_2)$ are the error terms from the lemma.
By \Cref{thm:comparison-small-step-approx}, for any $\delta>0$ we have
\[
\abs{S^\tau_t[\phi_1]-S^\tau_t[\phi_2]}\leq R^{\tau,\delta}[\abs{\phi_1-\phi_2}]
\]
for all sufficiently small $\tau$.
Combining these lemmas implies that
\begin{equation*}
\abs{S_{[t,t+\tau]}[\phi_1]-S_{[t,t+\tau]}[\phi_2]}
\leq
R_{[t,t+\tau]}[\abs{\phi_1-\phi_2}]+E(\phi_1,\phi_2)
\end{equation*}
where $E(\phi_1,\phi_2)$ is the error term $E(\phi_1,\phi_2)=E^1(\phi_1,\phi_2)-E^2(\phi_1,\phi_2)$.
By the error bounds in \Cref{thm:comparison-small-advance-bound}, this satisfies \[
\norm{E(\phi_1,\phi_2)}_{C^0}\leq C \tau^2(1+\norm{\phi_1}_{C^{0,1}}+\norm{\phi_2}_{C^{0,1}}).\]
Now, restrict to $\tau$ that divide $t_2-t_1$, set $N_\tau$ to be the integer such that $\tau N_\tau = t_2-t_1$, and set
\[
E_n
=
E(S_{[t_1,t_1+n\tau]}[\phi_1],S_{[t_1,t_1+n\tau]}[\phi_2])
.
\]
We claim first that
\[
\abs{S_{[t_1,t_1+m\tau]}[\phi_1]-S_{[t_1,t_1+m\tau]}[\phi_2]}
\leq
R^{h,\delta}_{[t_1,t_1+m\tau]}[\abs{\phi_1-\phi_2}]
\sum_{n=1}^{m} R^{h,\delta}_{[t_1+n\tau,t_1+m\tau]}[E_{n-1}].
\]
for $m\leq N_\tau$.
This is clear if $m=1$.
For $m>1$, we have inductively that
\begin{align*}
\AnchorRight\abs{S_{[t_1,t_1+m\tau]}[\phi_1]-S_{[t_1,t_1+m\tau]}[\phi_2]}
    \\&=
    \abs{
    S_{[t_1+(m-1)\tau,t_1+m\tau]}\circ
    S_{[t_1,t_1+(m-1)\tau]}[\phi_1]
    -
    S_{[t_1+(m-1)\tau,t_1+m\tau]}\circ
    S_{[t_1,t_1+(m-1)\tau]}[\phi_2]
    }
    \\&\leq
    R^{h,\delta}_{t_1+(m-1)\tau,t_1+m\tau}\bracket{
    S_{[t_1,t_1+(m-1)\tau]}[\phi_1]
    -
    S_{[t_1,t_1+(m-1)\tau]}[\phi_2]
    }+E_{m-1}
    \\&\leq
    R^{h,\delta}_{t_1+(m-1)\tau,t_1+m\tau}\bracket{
        R^{h,\delta}_{[t,t+(m-1)\tau]}[\abs{\phi_1-\phi_2}]
        +
        \sum_{n=1}^{m-1} R^{h,\delta}_{[t_1+n\tau,t_1+(m-1)\tau]}[E_{n-1}]
    }
    +E_{m-1}
    \\&=
    R^{h,\delta}_{[t_1,t_1+m\tau]}[\abs{\phi_1-\phi_2}]+\sum_{n=1}^{m} R^{h,\delta}_{[t_1+n\tau,t_1+m\tau]}[E_{n-1}].
\end{align*}
In particular, for $m=N_\tau$ this states
\begin{equation}\label{eqn:almost-leq-bound}
\abs{S_{[t_1,t_2]}[\phi_1]-S_{[t_1,t_2]}[\phi_2]}
    \leq
    R^{h,\delta}_{[t_1,t_2]}[\abs{\phi_1-\phi_2}]+\sum_{n=1}^{N_\tau} R^{h,\delta}_{[t_1+n\tau,t_2]}[E_n];
\end{equation}
observe that this is essentially Equation \eqref{eqn:bound-by-linear2} with an added error term.
For the error term in \eqref{eqn:almost-leq-bound} and using \Cref{thm:continuity}(iii) we compute
\begin{align*}
\norm{\sum_{n=1}^{N_\tau} R^{h,\delta}_{[t_2-(n-1)\tau,t_2]}[E_n]}_{C^0}
&\leq
\sum_{n=1}^{N_\tau}
C \tau^2 e^{L(t_2-t_1-n\tau)}(1+Ce^{L n\tau}\norm{\phi_1}_{C^{0,1}}+Ce^{L n\tau}\norm{\phi_2}_{C^{0,1}})
\\&\leq
C'
N_\tau
\tau^2
e^{L(t_2-t_1)}
(1+\norm{\phi_1}_{C^{0,1}}+\norm{\phi_2}_{C^{0,1}})
\\&=
\tau
\paren{C'(t_2-t_1)
e^{L(t_2-t_1)}
(1+\norm{\phi_1}_{C^{0,1}}+\norm{\phi_2}_{C^{0,1}})}
\to 0
\end{align*}
as $\tau\to 0$.
Hence taking this limit in Equation \eqref{eqn:almost-leq-bound},
\[
\abs{S_{[t_1,t_2]}[\phi_1]-S_{[t_1,t_2]}[\phi_2]}\leq R^{h,\delta}_{[t_1,t_2]}[\abs{\phi_1-\phi_2}]
\]
for any $\delta>0$.
By standard arguments solutions to \eqref{eqn:dde-linearized} are pointwise continuous in the coefficients of the matrices, so additionally taking $\delta\to 0$ shows the theorem holds for constant time delays.

Now suppose the time delays $h$ are a step function; in particular, let $h(t)=h^i\in \R^r$ for $t\in(t_{i-1},t_i)$ for $t_0<t_1<\ldots<t_{n-1}<t_n$.
In this case, we may write $S^h_{[t_0,t_n]}$ and $R^h_{[t_0,t_n]}$ as a finite composition of the operators with constant time delays.
We proceed by inducting on the number of intervals $n$.
If $n=1$, this is the case considered above. Otherwise, supposing the claim holds for $S^h_{[t_0,t_{n-1}]}$ and $R^h_{[t_0,t_{n-1}]}$, we have
\begin{align*}
\abs{S^h_{[t_0,t_n]}[\phi_1]-S^h_{[t_0,t_n]}[\phi_2]}
&=
\abs{S^{h_n}_{[t_{n-1},t_n]}\circ S^{h}_{[t_0,t_{n-1}]}(\phi_1)-S^{h_n}_{[t_{n-1},t_n]}\circ S^h_{[t_0,t_{n-1}]}(\phi_2)}
\\&\leq
R^{h_n}_{[t_{n-1},t_n]}\paren{
\abs{
S^h_{[t_0,t_{n-1}]}[\phi_1]-S_{[t_0,t_{n-1}]}[\phi_2]
}
}.
\end{align*}
Since $R$ is a nonnegative operator for any choice of time delays, we have by the inductive hypothesis that
\begin{align*}
R^{h^n}_{[t_{n-1},t_n]}\paren{
\abs{
S^h_{[t_0,t_{n-1}]}[\phi_1]-S_{[t_0,t_{n-1}]}[\phi_2]
}
}
&\leq
R^{h^n}_{[t_{n-1},t_n]}\circ R^{h}_{[t_0,t_{n-1}]}\parensize\big{\abs{\phi_1-\phi_2}}
\end{align*}\[=R^h_{[t_0,t_n]}\parensize\big{\abs{\phi_1-\phi_2}}.
\]

By continuity with respect to time delays (\Cref{thm:continuity}(iv)), the result then follows for any time delays $h$ in the $L^\infty$-closure of step functions, for Lipschitz continuous $\phi_1,\phi_2$.

Finally, as the time delays $h$ are locally bounded, we have that $S$ and $R$ are in fact continuous on $C((-\infty,0];\R^d)$ with the topology of uniform convergence on compact sets.
As Lipschitz continuous functions are dense in $C((-\infty,0];\R^d)$ with this topology, the result thus holds for all $\phi_1,\phi_2\in C((-\infty,0])$.
\end{proof}

\section{Global Linearization as a Switched Linear System}\label{sec:discretization}

In this section, we introduce a switched system framework for analyzing time-varying delays in delay differential equations. As illustrated in \Cref{ex:nis-system}, the time dependence of the delays can significantly alter system behavior and may lead to destabilization even when all associated constant-delay versions of the system are stable.

Our objective in this section is to analyze the behavior of the global linearization
\begin{equation}\label{eqn:dde-linearized-v2}
        x'(t)=M_0x(t)+\sum_{i=1}^r M_ix(t-h_i(t))
    \end{equation}
under the influence of varying delays. To analyze this effect, we model time-varying delays as switching between a family of constant-delay systems. This switched-system perspective provides a systematic way of handling changing delays.

We first restrict our attention to a class of time delays that we refer to as \emph{linear increasing delays}, for which the system \eqref{eqn:dde-linearized-v2} can be analyzed as a finite-dimensional linear switched system. Using properties of this system, which will be developed in Section~\ref{sec:gsr-bound}, we establish stability of the global linearization (see \Cref{thm:almost-mr}), which in turn yields Main Results~\ref{mr1}--\ref{mr3}.

Recall that the matrices $M_i$ in Equation \eqref{eqn:dde-linearized-v2} are the stability matrices given by Equation \eqref{eqn:stabmat}.
Throughout the remainder of the proof of Main Results 1--3, we will fix $T>0$ as an upper bound for the time delays, i.e. $h_i(t)\leq T$.
We will also assume that Equation \eqref{eqn:stabcond} holds, that is,
\[
\alpha(M_0+M_1+\ldots+M_r)<0,
\]
where $\alpha$ is the spectral abscissa.
Although Main Results 1-3 are stated in terms of uniformly continuous time delays, the approach we take allows us to prove these results for a strictly larger class of time delays, namely those in the $L^\infty$-closure of the \emph{linear increasing delays} $\mathrm{LI}$.
The linear increasing delays are described as follows:
\begin{definition}[Linear Increasing Delays]
    For an interval $J\subseteq [0,\infty)$ and $\tau>0$, let the \emph{linear increasing delays} with increment $\tau$, denoted $\mathrm{LI}_\tau(J)$, be the set of piecewise-linear functions $f:J\to \R^r$ satisfying the following properties:
    \begin{enumerate}[label=(\roman*)]
        \item $f$ is right-continuous on $\tau \N$ and differentiable on $J\setminus\tau N$,
        \item $f$ maps integer multiples of $\tau$ to integer multiples of $\tau$ that are contained in the interval $[0,T]$, i.e. $f(\tau\N\cap J)\subseteq \tau \N\cap [0,T]$,
        \item For $t\in J\setminus \tau \N$, $f'(t)=(1,\ldots,1)$.
    \end{enumerate}
    Denote the set of all {linear increasing delays} as
    $
    \mathrm{LI}(J)=\bigcup_{\tau >0}\mathrm{LI}_\tau(J),
    $
    and for brevity, label $\mathrm{LI}_\tau=\mathrm{LI}_\tau([0,\infty))$ and $\mathrm{LI}=\mathrm{LI}([0,\infty))$.
\end{definition}

Note that conditions (i) and (iii) {imply that} a function $f \in \mathrm{LI}_\tau(J)$ is uniquely {determined by its} values $\{f(n\tau)\}_{n \in \mathbb{N}}$. As we will see shortly, the system \eqref{eqn:dde-linearized-v2} {can} be analyzed explicitly for time delays $h \in \mathrm{LI}$.

We note that the assumption $h \in \overline{\mathrm{LI}}$ includes, {in particular,} the case where $h$ is uniformly continuous. {This is formalized in} the following proposition, {and corresponds to the setting} considered in Main Results~\ref{mr1}--\ref{mr3}.

\begin{proposition}\label{thm:uc-satisfies-assumption}
    Any bounded uniformly continuous $h$ is in the $L^\infty$-closure of $\mathrm{LI}$.
\end{proposition}
\begin{proof}
    For any given $\epsilon>0$, since the time delays $h_i$ are uniformly continuous, there exists $\delta>0$ such that $\abs{h_i(x)-h_i(y)}<\epsilon$ if $\abs{x-y}<\delta$ for any $x,y\geq 0$ and for each $i=1,\ldots,r$.
Define the functions $\hat h_i$ by setting
\[
\hat h_i(k\tau)=\tau \floor{\frac{1}{\tau}h_i(k\tau)},
\]
 and continuing linearly with slope 1 on each interval $[k\tau,(k+1)\tau)$; see Figure \ref{fig:discretizaiton-example}.
 The function $\hat h=(\hat h_1,\ldots, \hat h_r)$ is easily verified to be in $\mathrm{LI}$.
If we require $\tau$ to be less than $\min(\epsilon,\delta)$, we can bound 
\[
\normsize{}{h-\hat h}_\infty
\leq \sup_{x\in[0,\tau),k\geq 0}
\Big(
\normsize{}{h(k\tau+x)-h(k\tau)}
+\normsize{}{h(k\tau)-\hat h(k\tau)}\]\[+\normsize{}{\hat h(k\tau)-\hat h(k\tau+x)}
\Big)
<\epsilon+2\tau<3\epsilon
\]
which may be made arbitrarily small.
\end{proof}

\begin{figure}[h]
\begin{center}
\begin{tikzpicture}
	\draw[dotted,gray] (0,-1) -- (8,-1);
	\draw[dotted,gray] (0,0) -- (8,0);
	\draw[dotted,gray] (0,1) -- (8,1);
	\draw[dotted,gray] (0,2) -- (8,2);
	\draw[dotted,gray] (0,-1) -- (0,2);
	\draw[dotted,gray] (1,-1) -- (1,2);
	\draw[dotted,gray] (2,-1) -- (2,2);
	\draw[dotted,gray] (3,-1) -- (3,2);
	\draw[dotted,gray] (4,-1) -- (4,2);
	\draw[dotted,gray] (5,-1) -- (5,2);
	\draw[dotted,gray] (6,-1) -- (6,2);
	\draw[dotted,gray] (7,-1) -- (7,2);
	\draw[dotted,gray] (8,-1) -- (8,2);
	\draw[scale=2.0, domain=0:4, smooth, variable=\x, blue] plot ({\x}, {0.1+0.3*sin(180*\x)+0.05*\x*\x - 0.2*cos(360*(\x+1))});
	\filldraw [black] (0,-1) circle (1.3pt);
	\draw (0,-1) -- (1,0);
	\filldraw [black] (1,1) circle (1.3pt);
	\draw (1,1) -- (2,2);
	\filldraw [black] (2,-1) circle (1.3pt);
	\draw (2,-1) -- (3,0);
	\filldraw [black] (3,0) circle (1.3pt);
	\draw (3,0) -- (4,1);
	\filldraw [black] (4,0) circle (1.3pt);
	\draw (4,0) -- (5,1);
	\filldraw [black] (5,1) circle (1.3pt);
	\draw (5,1) -- (6,2);
	\filldraw [black] (6,0) circle (1.3pt);
	\draw (6,0) -- (7,1);
	\filldraw [black] (7,1) circle (1.3pt);
	\draw (7,1) -- (8,2);
	\draw[decorate, decoration={brace,raise=3pt}] (3,2) -- (4,2)
		node [midway, above=4pt] {$\tau$};
\end{tikzpicture}
\end{center}
\caption[Construction of discretized time delays]{\label{fig:discretizaiton-example} An illustration of how to uniformly approximate uniformly continuous time delays $h_i(t)$ (blue) by a function $\hat h_i(t)$ in $\mathrm{LI}$ (black), by setting $\hat h_i(n\tau)=\tau \floor{h_i(n\tau)/\tau}$.
The gridlines indicate multiples of $\tau$ in either direciton.
As $\tau\to 0$, the approximation $\hat h_i$ converges in the uniform norm to $h_i$.}
\end{figure}

To prove stability of the global linearization \eqref{eqn:dde-linearized-v2} for a {given} choice of time delays $h$, we introduce {a family} of operators $\Sigma(t,h)$ defined by
\[
\Sigma(t,h)=\set{R^{\eta}_t:\,\text{$\eta:[0,t]\to\R^d$ where $\eta(s)=h(s+nt)$ for some $n\in \N$}}.
\]
Observe that for any $n$, $R^h_{nt}$ may be written as the $n$-fold composition $R^{h_n}_t\circ\ldots\circ R^{h_1}_t$, where $h_i:[0,t]\to \R^d$ is given by $h_i(s)=h(s+(n-1)t)$; in particular, each $R^{h_i}_t\in \Sigma(t,h)$. Thus, {rather than} directly studying $R^h_{nt}$ for large $n$, we may {instead analyze} arbitrary products of elements in $\Sigma(t,h)$, which forms a \emph{discrete-time switched linear system}. The asymptotic behavior of such systems is {governed} by the \emph{joint spectral radius} (JSR), defined as follows. 

If $\mathscr M\subseteq L(X)$ is a collection of bounded operators on the Banach space $X$, let 
\[
\mathscr M^n=\set{A_1\cdots A_n:\,A_i\in \mathscr M}
\]
be the collection of all $n$-fold products of elements of $\mathscr M$. The \emph{joint spectral radius} $\hat\rho(\mathscr M)$, first introduced by Rota and Strang in \cite{rotastrang1960jsr}, is the value
\[
\hat\rho(\mathscr M)=\limsup_{n\to\infty}\sup_{A\in \mathscr M^n} \norm{A}^{1/n}.
\]
This characterizes the maximal exponential growth rate of products of the operators in $\mathscr M$. 
In particular, to show stability of Equation \eqref{eqn:dde-linearized-v2} for a particular choice of $h$, it is sufficient to show that $\hat\rho(\Sigma(t,h))<1$ for any $t>0$ (see \Cref{thm:almost-mr} for full details).

A closely related quantity that is useful to understand $\hat\rho(\mathscr M)$ is the \emph{generalized spectral radius} (GSR) $\rho_*(\mathscr M)$, defined by
\[
\rho_*(\mathscr M)
=
\sup_{n\geq 1} \sup_{A\in \mathscr M^n} \rho(A)^{1/n}.
\]
The JSR and GSR are related by the (generalized) Berger-Wang equation:

\begin{theorem*}[Generalized Berger-Wang {\cite[Theorem 4.5]{shulman2008arxiv}, \cite{shulman2012book}}]
Suppose that either
\begin{enumerate}[label=(\roman*)]
\item $\mathscr M\subseteq M_{n\times n}(\F)$ is a bounded set of $n\times n$ matrices, or
\item $\mathscr M\subseteq L(X)$ is a precompact set of linear operators on a Banach space $X$, and for some $n\geq 1$, every $A\in \mathscr M^n$ is a compact operator,
\end{enumerate}
then $\hat\rho(\mathscr M)=\rho_*(\mathscr M)$.

\end{theorem*}

If (i) holds, the identity $\hat\rho(\mathscr M)=\rho_*(\mathscr M)$ is usually referred to as the \textit{Berger–Wang formula}. While the joint spectral radius $\hat\rho(\mathscr M)$ is useful for describing the dynamical behavior of the switched system generated by $\mathscr M$, the generalized spectral radius $\rho_*(\mathscr M)$ is easier to compute in our setting. Moreover, since its definition involves only suprema, $\rho_*(\mathscr M)$ is better behaved under limits; for example, see the proof of \Cref{thm:jsr-h-bound}.

We now begin our analysis of $\hat\rho(\Sigma(t,h))$.
We will first consider the case where $h\in \mathrm{LI}$.
For each $t>0$, define the set
\[
\Sigma_\tau(t)=\set{R^h_{[0,t]}:\,h\in \linfuns_\tau([0,t])},
\]
to be the set of flow map operators for the linear DDE given in Equation \eqref{eqn:dde-linearized-v2} for each choice of time delay belonging to $\mathrm{LI}_\tau([0,t])$.
Unless otherwise stated, we will treat each $R^h_t$ as acting on the set of Lipschitz functions $C^{0,1}([0,t])$.
The structure of the functions in $\mathrm{LI}_\tau$ gives rise to the following nice relation betweeen the sets $\Sigma_\tau(t)$ for certain values of $t$:
\begin{proposition}[Composition and Decomposition]\label{thm:operator-decomposition}
    If $R^{h_1}\in \Sigma_\tau(n\tau)$ for $n\in \N$ and $R^{h_2}\in \Sigma_\tau(t)$ for any $t>0$, then the composition $R^{h_2}\circ R^{h_1}\in \Sigma_\tau(t+n\tau)$.

    Moreover, for $t>0$, if $n=\floor{t/\tau}$ then any $R^h\in \Sigma_\tau(t)$ may be decomposed (uniquely) into
    \[
    R^h=R^{h'}\circ R^{h_n}\circ\ldots\circ R^{h_1},
    \]
    where $R^{h_i}\in \Sigma_\tau(\tau)$ for each $1\leq i \leq n$, and $R^{h'}\in \Sigma_\tau(t-n\tau)$.
\end{proposition}
\begin{proof}
    If $R^{h_1}\in \Sigma_\tau(n\tau)$ and $R^{h_2}\in \Sigma_\tau(t)$, then note that the concatenation $h_1h_2\in \mathrm{LI}([0,t+n\tau])$, as the points $\tau \N\cap [0,t]$ for $h_2$ get translated to the points $(\tau \N\cap [0,t])+n\tau=\tau \N\cap [n\tau,t+n\tau]$.

    Now, if $R^h\in \Sigma_\tau(t)$ and $n=\floor{t/\tau}$, we may view the function $h\in \mathrm{LI}([0,t])$ as the concatenation of $n$ elements $h_1,\ldots, h_n$ of $\mathrm{LI}([0,\tau])$ followed by an element $h'$ of $\mathrm{LI}([0,t-n\tau])$.
    Splitting $h$ in this way gives the claimed decomposition.
\end{proof}

The primary significance of the decomposition in \Cref{thm:operator-decomposition} is that $\Sigma_\tau(t)$ can be understood purely in terms of products of the set $\Sigma_\tau(\tau)$.
Hence, our present objective is now to bound the JSR $\hat\rho(\Sigma_\tau(\tau))$, which in order to do we will be begin by studying the spectral properties of the elements of $\Sigma_\tau(\tau)$.

Fix some $R^h_\tau\in \Sigma_\tau(\tau)$, with time delays $h(t)=(h_1(t),\ldots,h_r(t))$ given by
\[
h_i(t)=n_i\tau +t,\quad\quad t\in[0,\tau).
\]
Let $x(t)$ be a solution to the corresponding DDE \eqref{eqn:dde-linearized-v2} with initial condition $\phi(t)$ defined on $[-T,0]$.
Then on the interval $[0,\tau]$, we have that $x$ satisfies
\begin{align*}
x'(t)&=\mathcal M_0 x(t)+\sum_{i=1}^r \mathcal M_i x(t-h_i(t))
\\&=\mathcal M_0 x(t)+\sum_{i=1}^r \mathcal M_i x(-n_i\tau)
\\&=\mathcal M_0 x(t)+\sum_{i=1}^r \mathcal M_i \phi(-n_i\tau).\eqlabel{eqn:linearized-interval}
\end{align*}
Hence $x(t)$ satisfies this inhomogeneous linear ODE \eqref{eqn:linearized-interval} on the interval, which can be solved using variation of parameters to give the solution
\begin{align*}
x(t)
&=
e^{\mathcal M_0 t} \phi(0)+e^{\mathcal M_0 t}\int_0^t e^{-\mathcal M_0 s}\sum_{i=1}^r \mathcal M_i \phi(-n_i\tau)\,ds
\\&=
e^{\mathcal M_0 t} \phi(0)+\mathcal M_0^{-1}(e^{\mathcal M_0 t}-I)\sum_{i=1}^r \mathcal M_i \phi(-n_i\tau). \eqlabel{eqn:discretization-formula}
\end{align*}

{In particular,} the solution on this interval is completely determined by the finite {collection} of points $\{\phi(-n_i\tau)\}$. From Equation \eqref{eqn:discretization-formula} we can derive the formula
\[
R^h_\tau[\phi](0)=x(\tau)=e^{\mathcal M_0 \tau} \phi(0)+\mathcal M_0^{-1}(e^{\mathcal M_0 \tau}-I)\sum_{i=1}^r \mathcal M_i \phi(-\tau n_i).
\]
Additionally, {it follows for any $m>0$ that} $R^h_\tau[\phi](-m\tau)=\phi(-(m-1)\tau)$.
We summarize these relations as the following linear system:
\begin{equation}
\pmat{
R^h_\tau[\phi](0) \\
R^h_\tau[\phi](-\tau) \\
R^h_\tau[\phi](-2\tau) \\
\vdots \\
R^h_\tau[\phi](-n_\tau\tau)
}
=
\pmat{
e^{ M_0 \tau} + N_0 & N_1 &  \cdots & N_{n_\tau-1} & N_{n_\tau} \\
I & 0 & \cdots  & 0& 0 \\
0 & I & \cdots  & 0& 0 \\
\vdots& \vdots & \ddots & \vdots & \vdots \\
0 & 0 & \cdots & I & 0
}
\pmat{
\phi(0) \\
\phi(-\tau) \\
\phi(-2\tau) \\
\vdots \\
\phi(-n_\tau\tau)
}
\label{eqn:discretized-matrices}
\end{equation}
where
\begin{equation}\label{eqn:discretized-matrices-index}
N_m=\mathcal M_0^{-1}(e^{\mathcal M_0 \tau}-I)\sum_{\substack{1\leq i \leq r\\n_i=m}} \mathcal M_i.
\end{equation}
Let $\mathscr M_\tau$ be the collection of all matrices of the form in \eqref{eqn:discretized-matrices}.
These matrices have a one-to-one correspondence with $\Sigma_\tau(\tau)$ (or equivalently $\mathrm{LI}_\tau([0,\tau])$):
a matrix $A\in \mathscr M_{\tau}$ corresponds to an operator $R^{h}\in \Sigma_\tau(\tau)$
via letting $n_i$ in \eqref{eqn:discretized-matrices-index} be the integer such that $h_i(0)=n_i\tau$.

We have the following theorem concerning the collections of matrices $\mathscr M_\tau$:
\begin{theorem}[Finite-dimensional JSR bound]\label{thm:main-jsr-bound}
    Fix any $t>0$.
    For any $\epsilon>0$, the collections of matrices $\mathscr M_{t/n}$ satisfy
    \[
    \hat\rho(\mathscr M_{t/n})\leq 1-\frac tn(\gamma_0-\epsilon)
    \]
    for all sufficiently large $n$,
    where $\gamma_0>0$ is given by
    \begin{equation}\tag{\ref{eqn:growth-rate2}}
    \gamma_0=\inf\set{\gamma\in \R:\,\det\paren{\gamma I+M_0+e^{\gamma T}\sum_{i=1}^rM_i
    }=0}.
    \end{equation}
\end{theorem}
The proof of \Cref{thm:main-jsr-bound} is graph-theoretic in nature and has important connections to the theory of delay-independent stability for discrete-time systems.
However, due to its length, we postpone the proof to the following Section 5.

We now seek to connect the collection of matrices $\mathscr M_\tau$ to the collection of operators $\Sigma_\tau(\tau)$.
Let $\pi_\tau:C([-T,0];\R^d)\to \R^{(n_\tau+1)d}$ be the evaluation map 
\[
\pi_\tau(\phi)=(\phi(0),\phi(-\tau),\ldots,\phi(-n_\tau\tau)).
\]
Equation \eqref{eqn:discretized-matrices} shows that $\pi_\tau$ is a semiconjugacy between $R_\tau^h$ and its corresponding matrix $A\in\mathscr M_\tau$.
The semiconjugacy $\pi_\tau$ has particularly nice properties involving the kernels and spectra of the elements of $\Sigma_\tau(\tau)$, which we record in the following two propositions.

\begin{proposition}\label{thm:discretization-kernel}
For any $t\geq T$ and $R\in \Sigma_\tau(t)$,
$
\ker(\pi_\tau)\subseteq \ker(R).
$
\end{proposition}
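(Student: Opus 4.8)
The plan is to exploit the fact that, for delays in $\linfuns_\tau$, the linear DDE \eqref{eqn:linearized} ``sees'' the initial data only through its values on the grid $\tau\Z$. Indeed, on each interval $[k\tau,(k+1)\tau)$ a delay $h_i\in\linfuns_\tau$ satisfies $h_i(s)=n_{i,k}\tau+(s-k\tau)$ for some nonnegative integer $n_{i,k}$ with $n_{i,k}\tau=h_i(k\tau)\le T'$, so the delayed argument $s-h_i(s)=(k-n_{i,k})\tau$ is a constant lying on the grid. Since $\ker(\pi_\tau)$ is precisely the set of continuous functions vanishing at all the sample points $-m\tau$, the goal is to show that such a $\phi$ forces the solution of \eqref{eqn:linearized} to vanish identically on $[0,\infty)$, from which $R^h_{[0,t]}[\phi]=0$ will follow once $t\ge T$.

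Concretely, fix $h\in\linfuns_\tau([0,t])$, let $\phi\in\ker(\pi_\tau)$, and let $x$ solve \eqref{eqn:linearized} with initial condition $\phi$ and delays $h$. I would prove by induction on $k\ge 0$ that $x(j\tau)=0$ for every integer $j$ with $-n_\tau\le j\le k$. The case $k=0$ is exactly the hypothesis $\pi_\tau(\phi)=0$, i.e. $\phi(-m\tau)=0$ for $0\le m\le n_\tau$; here one uses that the grid of $\pi_\tau$ reaches past the maximal delay $T'$ (so $n_\tau\tau\ge T'$), which is what guarantees that $\ker(\pi_\tau)$ controls $\phi$ at \emph{every} grid point a delayed argument can reference. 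For the inductive step, on $[k\tau,(k+1)\tau]$ each delayed term equals $x((k-n_{i,k})\tau)$ with $-n_\tau\le k-n_{i,k}\le k$, hence vanishes by the inductive hypothesis; thus on this interval $x$ solves the homogeneous linear ODE $x'(s)=M_0x(s)$ with $x(k\tau)=0$, so $x\equiv 0$ on $[k\tau,(k+1)\tau]$ by uniqueness of solutions of linear ODEs, and in particular $x((k+1)\tau)=0$. Letting $k$ range over all nonnegative integers covers $[0,\infty)$ (a possible final partial interval $[K\tau,t]$ is handled identically), so $x\equiv 0$ on $[0,\infty)$.

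To conclude, recall that $R^h_{[0,t]}[\phi](s)=x(t+s)$ for $s\in[-T,0]$. When $t\ge T$ we have $t+s\ge 0$ for all such $s$, so $R^h_{[0,t]}[\phi]$ is a restriction of $x$ to a subinterval of $[0,\infty)$, on which $x$ vanishes; hence $R^h_{[0,t]}[\phi]=0$ and $\phi\in\ker(R)$, as claimed.

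I expect the main obstacle to be purely one of bookkeeping: making sure the sampling grid of $\pi_\tau$ is matched to the delay range of $\linfuns_\tau$ — which, recall, may slightly exceed $T$, reaching up to $T'$ — so that $\ker(\pi_\tau)$ really does pin down $\phi$ at all grid points the equation can reach. An equivalent route is to invoke the semiconjugacy recorded in \eqref{eqn:discretized-matrices}: $\pi_\tau$ intertwines $R^h_{[0,t]}$ with a finite product of matrices of $\mathscr{M}_\tau$, and such a product sends $0$ to $0$, giving $\pi_\tau(R^h_{[0,t]}[\phi])=0$ immediately; the only substantive step remaining is then to upgrade this to $R^h_{[0,t]}[\phi]=0$, which is exactly the ODE-uniqueness argument above together with $t\ge T$.
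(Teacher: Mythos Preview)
Your argument is correct and is essentially the same as the paper's: both proceed interval by interval, using that on $[k\tau,(k+1)\tau)$ the delayed arguments land on grid points already known to vanish, so the solution solves a homogeneous linear ODE with zero data and hence is identically zero there. The only cosmetic differences are that the paper phrases the induction as a composition $R=R'\circ R_1\circ\cdots\circ R_{n_\tau}$ of $\tau$-step operators and appeals to the explicit formula \eqref{eq:discretization-formula} instead of ODE uniqueness, whereas you track the trajectory $x$ directly; your version is arguably cleaner, and your flag about matching $n_\tau\tau$ to $T'$ rather than $T$ is a legitimate bookkeeping point the paper leaves implicit.
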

\begin{proof}
Let $R\in \Sigma_\tau(t)$ for $t\geq T=n_\tau\tau$, and suppose $\phi\in \ker(\pi_\tau)$.
Using \Cref{thm:operator-decomposition}, we may write $R= R'\circ R_1\circ \cdots \circ R_{n_\tau}$ for $R_i\in \Sigma_\tau(\tau)$ and $R'\in \Sigma_\tau(t-n_\tau\tau)$.
The computation \eqref{eqn:discretization-formula} shows that $R_{n_\tau}[\phi]$ is identically zero on the interval $[-\tau,0]$.
As it is still in the kernel of $\pi_\tau$ as well, repeating this recursively implies that $R_1\circ \cdots \circ R_{n_\tau}[\phi]=0$, so $\phi\in \ker(R)$.
\end{proof}

\begin{lemma}\label{thm:discretization-spectrum}
Let $R_1,\ldots,R_m\in \Sigma_\tau(\tau)$ with corresponding matrices $A_1,\ldots, A_m\in \mathscr M_\tau$.
Then
\[
\sigma(R_1\cdots R_m)
=
\sigma(A_1\cdots A_m)\cup\set{0}.
\]
In particular, $\rho(R_1\cdots R_m)=\rho(A_1\cdots A_m)$.
\end{lemma}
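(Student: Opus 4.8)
The plan is to replace the infinite‑dimensional operator $R:=R_1\cdots R_m$ by a finite‑dimensional invariant piece on which it is conjugate, through $\pi_\tau$, to $A:=A_1\cdots A_m$, with everything else contributing only the eigenvalue $0$. First I would record the semiconjugacy: Equation \eqref{eqn:discretized-matrices} is exactly the identity $\pi_\tau\circ R_i=A_i\circ\pi_\tau$, and composing these gives $\pi_\tau\circ R=A\circ\pi_\tau$ on $X:=C([-T,0];\R^d)$. Since $\rho$ is the maximal modulus of the spectrum, the spectral identity immediately yields $\rho(R)=\rho(A)$, so the real task is $\sigma(R)=\sigma(A)\cup\set{0}$; and as $\sigma(A)\cup\set{0}$ already contains $0$, it suffices to establish $0\in\sigma(R)$ together with $\sigma(R)\setminus\set{0}=\sigma(A)\setminus\set{0}$.

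The heart of the argument is the subspace $W:=\mathrm{range}(R^{n_\tau})$. The operator $R^{n_\tau}$ is a product of $mn_\tau$ operators from $\Sigma_\tau(\tau)$, hence lies in $\Sigma_\tau(mn_\tau\tau)$ with $mn_\tau\tau\ge T$, so \Cref{thm:discretization-kernel} gives $\ker\pi_\tau\subseteq\ker R^{n_\tau}$; together with surjectivity of $\pi_\tau$ this means $R^{n_\tau}$ factors as $R^{n_\tau}=\bar R\circ\pi_\tau$ for some linear $\bar R$ with finite‑dimensional domain $\R^{(n_\tau+1)d}$, and therefore $W=\mathrm{range}(\bar R)$ is finite‑dimensional. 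Three consequences follow: (a) $R$ cannot be invertible --- otherwise $R^{n_\tau}$ would be, forcing $W=X$ --- so $0\in\sigma(R)$; (b) $R(W)=\mathrm{range}(R^{n_\tau+1})\subseteq W$, so $W$ is $R$-invariant; (c) the operator $\tilde R$ induced by $R$ on $X/W$ satisfies $\tilde R^{n_\tau}=0$, hence $\sigma(\tilde R)\subseteq\set{0}$. Because any $\mu\notin\sigma(R|_W)\cup\sigma(\tilde R)$ makes $R-\mu$ bijective (and hence boundedly invertible, by the open mapping theorem), $\sigma(R)\subseteq\sigma(R|_W)\cup\sigma(\tilde R)\subseteq\sigma(R|_W)\cup\set{0}$, and finite‑dimensionality of $W$ makes each point of $\sigma(R|_W)$ an honest eigenvalue of $R$.

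Matching the nonzero spectrum is then short, using \Cref{thm:discretization-kernel} throughout. For $\sigma(R)\setminus\set{0}\subseteq\sigma(A)$: if $\lambda\ne 0$ lies in $\sigma(R)$ then $\lambda\in\sigma(R|_W)$, so $Rw=\lambda w$ for some $0\ne w\in W$; applying $\pi_\tau$ gives $A\pi_\tau w=\lambda\pi_\tau w$, and $\pi_\tau w\ne 0$ --- otherwise $w\in\ker\pi_\tau\subseteq\ker R^{n_\tau}$, giving $0=R^{n_\tau}w=\lambda^{n_\tau}w$ and hence $w=0$ --- so $\lambda\in\sigma(A)$. For $\sigma(A)\setminus\set{0}\subseteq\sigma(R)$: given $Av=\lambda v$ with $\lambda\ne 0$, pick $\phi$ with $\pi_\tau\phi=v$ (possible since $\pi_\tau$ is onto) and set $\psi:=R\phi-\lambda\phi$; then $\pi_\tau\psi=Av-\lambda v=0$, so $\psi\in\ker\pi_\tau$ and $R^{n_\tau}\psi=0$. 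From the elementary identity $R^k\phi=\lambda^k\phi+\sum_{j=0}^{k-1}\lambda^{k-1-j}R^j\psi$ together with $R^{n_\tau}\psi=0$, a one‑line telescoping gives $R(R^{n_\tau}\phi)=\lambda\,R^{n_\tau}\phi$, while $\pi_\tau(R^{n_\tau}\phi)=A^{n_\tau}v=\lambda^{n_\tau}v\ne 0$ shows $R^{n_\tau}\phi\ne 0$; thus $\lambda$ is an eigenvalue of $R$. Combining the two inclusions with $0\in\sigma(R)$ gives $\sigma(R)=\sigma(A)\cup\set{0}$, and $\rho(R)=\rho(A)$ follows.

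I expect the genuine difficulty to lie in setting up the finite‑dimensional model rather than in the two eigenvalue‑matching computations. The obstruction is that $\pi_\tau$ has infinite‑dimensional kernel, so invertibility of $A-\mu$ cannot simply be transported back to $R-\mu$; and for small $m$ the operator $R$ does not even factor through $\pi_\tau$. Passing to $R^{n_\tau}$ and invoking \Cref{thm:discretization-kernel} to annihilate $\ker\pi_\tau$ is the device that makes both $W=\mathrm{range}(R^{n_\tau})$ finite‑dimensional and the induced operator on $X/W$ nilpotent; getting these facts in order --- along with the standard but worth‑stating point that the spectrum of a block‑triangular operator lies in the union of its diagonal blocks' spectra --- is where the care is needed.
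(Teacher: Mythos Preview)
Your proof is correct and follows the same overall plan as the paper --- establish the semiconjugacy, use \Cref{thm:discretization-kernel} to control $\ker\pi_\tau$, and then match nonzero eigenvalues in both directions --- but you handle the reduction to point spectrum differently. The paper observes that $R^\ell$ (for $\ell m\tau\ge T$) has finite rank, hence is compact, and then invokes \cite[Chapter VII.4, Theorem 6]{dunford1988linearpart1} to conclude that every nonzero spectral value of $R$ is an eigenvalue. You instead work with the finite-dimensional $R$-invariant subspace $W=\mathrm{range}(R^{n_\tau})$ and the nilpotent quotient $\tilde R$ on $X/W$, and use the block-triangular inclusion $\sigma(R)\subseteq\sigma(R|_W)\cup\sigma(\tilde R)$ to reach the same conclusion by elementary means. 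Your route is self-contained and avoids the external spectral-theory citation; the paper's is shorter once that citation is granted. The two eigenvalue-matching arguments are essentially identical, though the paper's reverse inclusion is slightly cleaner: it writes $R(R^\ell\phi)-\lambda R^\ell\phi=R^\ell(R\phi-\lambda\phi)=R^\ell\psi=0$ directly, sidestepping your telescoping identity.
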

The proof of \Cref{thm:discretization-spectrum} is given in Appendix A.
\Cref{thm:discretization-spectrum} has the following consequence:
\begin{corollary}
    For any $\tau>0$, $\rho_*(\mathscr M_\tau)=\rho_*(\Sigma_\tau(\tau))$.
\end{corollary}
\begin{proof}
    Immediate from the definitions of $\rho_*(\mathscr M_\tau)$ and $\rho_*(\Sigma_\tau(\tau))$ and applying \Cref{thm:discretization-spectrum}.
\end{proof}

We are now ready to give an upper bound on $\hat\rho(\Sigma(t,h))$.
The key idea is that the elements of $\Sigma(t,h)$ may be approximated by elements of $\Sigma_\tau(t)$, by approximating the time delays $h$ (see \Cref{thm:continuity}).
This computation requires the following technical lemma in order to satisfy the precompactness requirement of the generalized Berger-Wang theorem:
\begin{lemma}\label{thm:splitting-value}
    If $h\in\overline{\mathrm{LI}}$, then there is $t>0$ such that the following holds:
    \begin{enumerate}[label=(\roman*)]
        \item For some integers $m_n\to\infty$, there are time delays $\hat h^n\in \mathrm{LI}_{t/m_n}$ with $\hat h^n\to h$ in the $L^\infty$-norm as $n\to\infty$.
        \item Labeling $h^{k;t}:[0,t]\to \R^d$ as the funtion given by $h^{k;t}(s)=h(s+kt)$, the set
        \[
        \set{h^{k;t}:\,k\in \N}
        \]
        is precompact in $L^\infty([0,t];\R^d)$.
    \end{enumerate}
\end{lemma}
The proof of \Cref{thm:splitting-value} is given in the Appendix.
We now calculate a bound on $\hat\rho(\Sigma(t,h))$.

\begin{lemma}\label{thm:jsr-h-bound}
    If $h\in \overline{\mathrm{LI}}$, then there is $t>0$ such that
\[
\hat\rho(\Sigma(t,h))\leq e^{-\gamma_0 t}
\]
    where $\gamma_0>0$ is as in \Cref{thm:main-jsr-bound}.
\end{lemma}

\begin{proof}
    Let $t>0$ satisfy the conclusion of \Cref{thm:splitting-value}.
    For every $\epsilon>0$, \Cref{thm:main-jsr-bound} implies for all sufficiently large $n$ that
    \[
    \hat\rho(\mathscr M_{t/n})\leq 1-\frac{t}{n}(\gamma_0-\epsilon)
    \]
    Since each of the collections $\mathscr M_{t/n}$ are clearly bounded (c.f. Equation \eqref{eqn:discretized-matrices}, the Berger-Wang equation \cite{berger1992} implies the same bounds holds for $\rho_*(\mathscr M_{t/n})$.
    Next, since by the decomposition in \Cref{thm:operator-decomposition} we have $\Sigma_{t/n}(t)=\Sigma_{t/n}(t/n)^n$.
    Thus we compute
    \begin{align*}
    \rho_*(\Sigma_{t/n}(t))
    &\leq \rho_*(\Sigma_{t/n}(t/n))^n
    \\&=
    \rho_*(\mathscr M_{t/n})^n
    \\&\leq\paren{1-\frac{t}{n}(\gamma_0-\epsilon)}^n
    \\&=\exp(-(\gamma_0-\epsilon)t+O(t^2n^{-1}))
    \\&\leq\exp(-(\gamma_0-2\epsilon)t)
    \end{align*}
    for sufficiently large $n$ depending on $\epsilon$.

    Now, let $R^{h_1},\ldots, R_{h_n}\in \Sigma(t,h)$.
    Since $t$ satisfies the conclusion of \Cref{thm:splitting-value}, there are $\hat h_{1,k},\ldots,\hat h_{m,k}\in \mathrm{LI}_{t/n_k}([0,t])$ where $n_k\to\infty$ and $\hat h_{i,k}\to h_i$ as $k\to\infty$.
    By continuity of the spectral radius, as $k\to\infty$,
    \[
    \rho(R^{h_{m}}\cdots R^{h_{1}})
    =\lim_{k\to\infty} \rho(R^{\hat h_{m,k}}\cdots R^{\hat h_{1,k}})
    \leq \lim_{k\to\infty}\rho_*(\mathscr M_{t/n_k})^m
    \leq \exp(-(\gamma_0-2\epsilon)mt),
    \]
    so
    \[
    \rho_*(\Sigma(t,h))
    =
    \sup_{m\geq 1}\sup_{R^{h_i}\in \Sigma(t,h)}\rho(R^{h_{m}}\cdots R^{h_{1}})^{1/m}
    \leq \exp(-(\gamma_0-2\epsilon)t).
    \]
    As this holds for every $\epsilon>0$, we thus have $\rho_*(\Sigma(t,h))\leq e^{-\gamma_0 t}$.
    
    Next, as $\set{h^{n;t}}$ is precompact due to \Cref{thm:splitting-value}, the set $\Sigma(t,h)$ is the image under the continuous map $h\mapsto R^h_t$ of the precompact set $\set{h^{n;t}}$, and is itself precompact.
    Finally, note that every operator $R\in\Sigma(t,h)^m$ is of the form $R=R^{\tilde h}_{tm}$ for some time delay $\tilde h$; hence if $tm\geq T$, each such operator is compact.
    Thus the generalized Berger-Wang equation applies, which implies
    \[
    \hat\rho(\Sigma(h,t))=\rho_*(\Sigma(t,h))\leq e^{-\gamma_0 t}
    \]
    as desired.
\end{proof}

We now are ready to show the following special case of Main Results 1 and 2, from which all three main results follow.
\begin{theorem}[Stability of the Global Linearization]\label{thm:almost-mr}
    Main Results 1 and 2 hold for the global linearization
    \[
    x'(t)=M_0x(t)+\sum_{i=1}^r M_i x(t-h_i(t))
    \]
    for any time delays $h(t)$ satisfying Assumption 1.
    In particular, the solution $x = 0$ is globally exponentially stable. Moreover, every solution $x(t)$ satisfies
\begin{equation}\label{eqn:almostmr-statement}
\norm{x(t)} \le C e^{-(\gamma_0-\varepsilon)t}
\sup_{s\le 0}\norm{x(s)}
\end{equation}
for any $\epsilon>0$ and some constant $C>0$, where $\gamma_0>0$ is defined in Equation~\eqref{eqn:growth-rate2}.
\end{theorem}

\begin{proof}
    It suffices to show that Equation \eqref{eqn:almostmr-statement} holds.
    Let $t_0$ be a splitting value for $h$, whose existence is guaranteed by \Cref{thm:splitting-value}; then \Cref{thm:jsr-h-bound} implies $\hat\rho(\Sigma(h,t))\leq e^{-\gamma_0 t}$,
    so for any $\epsilon>0$, there is $C>0$ such that for sufficiently large $m$ and all $R^{h_1},\ldots, R^{h_m}\in \Sigma(h,t_0)$,
    \[
    \norm{R^{h_m}\cdots R^{h_1}}^{1/m}\leq \exp((-\gamma_0+\epsilon)t_0).
    \]
    Now, for any $t\geq 0$, write $t=mt_0+t'$ for $0\leq t'<t_0$.
    We may decompose $h|_{[0,t]}$ as a concatenation of functions $h^1,\ldots, h^m,h'$
    where $h^i:[0,t_0]\to \R^d$, $h':[0,t']\to\R^d$.
    This gives a decomposition $R^h_t=R^{h'}_{t'}\circ R^{h_m}_{t_0}\circ\ldots\circ R^{h_1}_{t_0}$.
    Using continuity of solutions and the joint spectral radius bound, we thus have
    \[
    \norm{R^h_t}_{C^{0,1}}
    \leq
    \norm{R^{h'}_{t'}}\norm{R^{h_m}_{t_0}\ldots R^{h_1}_{t_0}}
    \leq e^{Lt'}e^{(-\gamma_0+\epsilon)t_0m}
    \leq
    Ce^{(-\gamma_0+\epsilon)t}
    \]
    for $C=\exp\paren{(L+\gamma_0-\epsilon)T}$.
    Let $x(t)$ be the solution to Equation \eqref{eqn:dde-linearized-v2} with initial condition $x(t)=\phi(t)$, $t\leq 0$, where $\phi\in C([-T,0])$.
    Observe that for $t\geq T$ we may write
    \[
    x(t)=R^h_t[\phi](0)=R^h_{[T,t]}[R^h_T[\phi]](0);
    \]
    note furthermore that $R^h_T[\phi]$ is Lipschitz continuous and $\norm{R^h_T[\phi]}_{C^{0,1}}\leq C'\norm{\phi}_{C^0}$ by \Cref{thm:continuity}.
    Hence
    \[
    \norm{x(t)}
    =
    \norm{R^h_t[\phi](0)}
    \leq
    \norm{R^h[\phi]}_{C^{0,1}}
    \leq
    \norm{R^h_{[T,t]}}_{C^{0,1}}\norm{R^h_T[\phi]}_{C^{0,1}}
    \leq CC' e^{(-\gamma_0+\epsilon)t}\norm{\phi}_{C^0}
    \]
    which shows the theorem.
\end{proof}

Combining this result with \Cref{thm:bound-by-linear}, we are now able to prove Main Results \ref{mr1}--\ref{mr3}.
We begin with Main Result 3:
\begin{proof}[Proof of Main Result 3]
    Suppose that the condition \eqref{eqn:stabcond} holds, and let $x_1(t),x_2(t)$ be any solutions to the system \eqref{eqn:dde}.
    Let $y(t)$ be the solution to the global linearization \eqref{eqn:dde-linearized} with initial condition $y(t)=\abs{x_1(t)-x_2(t)}$ for $t\leq 0$.
    Then, \Cref{thm:bound-by-linear} implies 
    \begin{equation}\label{eqn:mr-proof-step1}
        \abs{x_1(t)-x_2(t)}\leq y(t)
    \end{equation} for all $t\geq 0$.
    As uniformly continuous time delays satisfy Assumption 1, \Cref{thm:almost-mr} implies that \begin{equation}\label{eqn:mr-proof-step2}
        \norm{y(t)}\leq Ce^{-(\gamma_0-\epsilon)t}\sup_{s\leq 0}\norm{y(s)}.
    \end{equation}
    Choosing any of the standard $p$-norms for the norm on $\R^d$, it is the case that $\norm{\abs{v}}=\norm{v}$ and $0\leq u \leq v$ implies $\norm{u}\leq \norm{v}$
    Hence, combining \eqref{eqn:mr-proof-step1} and \eqref{eqn:mr-proof-step2}
    implies that
    \[
    \norm{x_1(t)-x_2(t)}\leq Ce^{-(\gamma_0-\epsilon)t}\sup_{s\leq 0}\norm{x_1(s)-x_2(s)}
    \]
    showing Main Result 3.
\end{proof}
Main Results 1 and 2 follow as a corollary of Main Result \ref{mr3}.
\begin{proof}[Proof of Main Results 1 and 2]
    Let $x_0\in \R^d$ be the equilibrium point of the system \eqref{eqn:dde}.
    Applying Main Result 3 with $x_1(t)=x(t)$ any solution and $x_2(t)=x_0$ the equilibrium solution, we have
\[
\norm{x(t)-x_0}\leq Ce^{-(\gamma_0-\epsilon)t}\sup_{s\leq 0}\norm{x(s)-x_0}
\]
for any $\epsilon>0$ and some $C>0$,
which shows Main Result 2.
    This moreover shows that $x_0$ is globally exponentially stable, showing Main Result 1.
\end{proof}
Note that the above proofs do not depend in any essential way on the time delays $h$ being uniformly continuous, other than requiring $h\in\overline{\mathrm{LI}}$; hence additionally Main Results \ref{mr1}--\ref{mr3} hold for any $h\in \overline{\mathrm{LI}}$.
For example, this more general framework includes the discontinuous time delays $h(t)=t\mod 2$ in Example 2.1(d).

\section{The Joint Spectral Radius of Discretized Systems}\label{sec:gsr-bound}
With the proof of Main Results~\ref{mr1}--\ref{mr3} reduced to proving \Cref{thm:main-jsr-bound}, it remains to bound the joint spectral radius of the matrix family $\mathscr M_\tau$ arising from the discretization of the global linearization. These matrices encode the delayed interactions in the original system, but their structure also makes direct spectral analysis a challenge.

To address this, we develop a novel approach combining two complementary tools: a characterization of the joint spectral radius via row-independent closures, and the graph-theoretic technique of isoradial reduction. This framework reduces the problem to the analysis of simpler matrix families, ultimately yielding the desired bound.

\Cref{thm:main-jsr-bound} considers the joint spectral radius for the collection of all matrices of the form
\begin{equation}
\label{eqn:discretized-matrices-2}
A=
\pmat{
e^{ M_0 \tau} + N_0 & N_1 &  \cdots & N_{n_\tau-1} & N_{n_\tau} \\
I & 0 & \cdots  & 0& 0 \\
0 & I & \cdots  & 0& 0 \\
\vdots& \vdots & \ddots & \vdots & \vdots \\
0 & 0 & \cdots & I & 0
},\quad\quad
N_m=\mathcal M_0^{-1}(e^{\mathcal M_0 \tau}-I)\sum_{\substack{1\leq i \leq r\\n_i=m}} \mathcal M_i
\end{equation}
where $M_0,M_1,\ldots, M_r$ are the stability matrices of Equation \eqref{eqn:dde}, $n_\tau = \floor{T/\tau}$, and $n_1,\ldots, n_r$ are integers in $\set{0,1,\ldots,n_\tau}$ determined by the time delays of the DDE \eqref{eqn:dde-linearized-v2}.
The proof of this theorem has interesting connections with the problem of delay-independent stability. 

To carry out the proof, we first introduce two tools. The first is a result of Blondel and Nesterov \cite{blondel2010} characterizing the joint spectral radius for certain classes of matrices. This requires the notion of a \emph{row-independent closure}, defined for $\mathscr{M}\subseteq M_{n\times n}(\F)$ by
\[
\mathrm{RIC}(\mathscr M)
=
\Bigl\{
A\in M_{n\times n}(\F):
\text{for each } i,\ \text{row } i \text{ of } A \text{ equals row } i \text{ of some } A_i\in\mathscr M
\Bigr\}.
\]
The result is as follows.

\begin{theorem*}[{\cite[Theorem 2]{blondel2010}}]
    If $\mathscr M\subseteq M_n(\R)$ is a compact set of non-negative matrices that satisfies $\mathrm{RIC}(\mathscr M)=\mathscr M$, then
    \[
    \hat\rho(\mathscr M)=\sup_{A\in \mathscr M}\rho(A).
    \]
\end{theorem*}
We will use this result in the following form.
\begin{corollary}\label{thm:ric-bound}
    For a bounded set $\mathscr M\subseteq M_n(\R)$ of nonnegative matrices,
    \[
    \hat\rho(\mathscr M)\leq \sup_{A\in \mathrm{RIC}(\overline{\mathscr M})}\rho(A).
    \]
\end{corollary}
\begin{proof}
    From the definition of $\hat\rho$ it is clear that it is nondecreasing under inclusion; hence $\hat\rho(\mathscr M)\leq \hat\rho(\mathrm{RIC}(\overline{\mathscr M}))$. The result then follows by the above-cited theorem.
\end{proof}

The second tool we use is the graph-theoretic technique of \emph{isoradial reduction}, a special case of \emph{isospectral reduction} that produces a lower-dimensional matrix while preserving spectral properties. To define it, let $B\in M_{n\times n}(\F)$ and set $N=\{1,\ldots,n\}$. For subsets $R,C\subseteq N$, denote by $B_{RC}$ the $\lvert R\rvert\times\lvert C\rvert$ submatrix of $B$ formed by selecting rows in $R$ and columns in $C$. For $S\subseteq N$, write $\bar S = N\setminus S$.

The isoradial reduction of $B$ over a nonempty subset $S\subseteq N$ is defined by
\begin{equation}\label{eqn:isoradial-reduction}
\mathcal I_S(B)
=
B_{SS}-B_{S\bar S}(B_{\bar S\bar S}-\rho(B) I)^{-1}B_{\bar S S}.
\end{equation}

The isoradial reduction $\mathcal I_S(B)$ is not defined for every nonnegative matrix $B\in M_{n\times n}(\R)$ and subset $S\subseteq N$, since the inverse in \eqref{eqn:isoradial-reduction} may fail to exist. When it is defined, the following result shows that it preserves the spectral radius.
\begin{theorem*}[Isoradial reduction {\cite[Theorem 4.4]{smith2019}}]
If $\mathcal I_S(B)$ exists, then
\begin{equation}\label{eqn:isoradial}
\rho(\mathcal I_S(B)) = \rho(B).
\end{equation}
\end{theorem*}

A useful consequence of Equation \eqref{eqn:isoradial} is the following result, originally proved as Lemma 3 in \cite{Carter_2022}.

\begin{lemma}\label{thm:delay-matrix-isoradial}
    Let $A\in M_{mn\times mn}(\R)$ be a nonnegative block matrix with $\rho(A)>0$ of the form
\[
A
=
\pmat{
A_0 & A_1 & A_2 & \cdots & A_{n-1} \\
I & 0 & 0 & \cdots & 0 \\
0 & I & 0 & \cdots & 0 \\
\vdots & \vdots & \ddots &  & \vdots \\
0 & 0 & \cdots & I & 0
}
\]
where each block $A_i\in M_{m\times m}(\R)$ is square.
Then 
\begin{equation}\label{eqn:delay-reduction}
\rho(A)
=
\rho\paren{
\sum_{i=0}^{n-1} \rho(A)^{-i} A_i
}
\end{equation}
Moreover, $\rho(A)<1$ if and only if $\rho(\sum_{i=0}^{n-1} A_i)<1$, in which case $\rho(\sum_{i=0}^{n-1} A_i)\leq \rho(A)<1$.
\end{lemma}
\begin{proof}
Equation \eqref{eqn:delay-reduction} follows immediately from Equation \eqref{eqn:isoradial} by computing the isoradial reduction of $A$ with respect to the set $S=\set{1,\ldots,m}$; see \cite{Carter_2022} for details.
The final claim follows from monotonicity of the spectral radius in each entry of a nonnegative matrix (see, for instance, \cite[Chapter 8]{HornJohnson2013}). 
\end{proof}

Matrices of this form typically arise from time-delayed discrete-time dynamical systems; see \cite{Carter_2022} for more details. It is interesting to note that the matrices in the collection $\mathscr M_\tau$ are of the same form (see Equation \eqref{eqn:discretized-matrices-2}). Finally, before proving \Cref{thm:main-jsr-bound}, we need the following lemma. Recall that for a square matrix $A$, the spectral abscissa $\alpha(A)=\sup\Re(\sigma(A))$ is the largest real part of any of its eigenvalues.

Matrices of this form typically arise in time-delayed discrete-time dynamical systems; see \cite{Carter_2022} for further details. Notably, the matrices in the collection $\mathscr M_\tau$ have the same structure (cf. \eqref{eqn:discretized-matrices-2}).

To proceed with the proof of \Cref{thm:main-jsr-bound}, we require the following lemma. We first recall that for a square matrix $A$, the spectral abscissa $\alpha(A)=\sup\Re(\sigma(A))$ is the largest real part of its eigenvalues.

\begin{lemma}[label=thm:spec-rad-asymptotic]
Let {$(A_n)_{n\ge 1}$} be a sequence of {square} matrices converging to $A$.
Then, as $n\to\infty$,
\[
\rho\paren{I+\frac1n A_n}=1+\frac1n\alpha(A)+o\paren{\frac1n}.
\]
\end{lemma}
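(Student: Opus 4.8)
The plan is to reduce the claim to a statement about the eigenvalues of $A_n$ by writing everything in terms of the scalar map $\mu \mapsto 1 + \mu/n$ acting on the spectrum. Observe first that $\sigma(I + \frac1n A_n) = \{1 + \frac1n \mu : \mu \in \sigma(A_n)\}$, so $\rho(I + \frac1n A_n) = \max_{\mu \in \sigma(A_n)} |1 + \frac1n \mu|$. Thus I must understand how $\sigma(A_n)$ behaves as $n \to \infty$. The matrices $A_n$ are square of a \emph{fixed} size $d$ (the sequence converges to $A$), so the characteristic polynomials $p_n(z) = \det(zI - A_n)$ are monic of fixed degree $d$ with coefficients converging to those of $p(z) = \det(zI - A)$. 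By continuity of roots of polynomials (e.g. via Rouché, or the standard fact that the roots of a monic polynomial depend continuously on its coefficients), the spectrum $\sigma(A_n)$ converges to $\sigma(A)$ in the sense that the eigenvalues of $A_n$, listed with multiplicity, can be ordered so that $\mu_j^{(n)} \to \mu_j$ for each $j$, where $\mu_1,\dots,\mu_d$ are the eigenvalues of $A$.

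Next I would expand $|1 + \frac1n \mu_j^{(n)}|$ for each $j$. Writing $\mu_j^{(n)} = a_j^{(n)} + i b_j^{(n)}$ with $a_j^{(n)} \to \Re \mu_j =: a_j$ and $b_j^{(n)}$ bounded, we have
\[
\Bigl| 1 + \tfrac1n \mu_j^{(n)} \Bigr|^2 = \Bigl(1 + \tfrac1n a_j^{(n)}\Bigr)^2 + \tfrac1{n^2}\bigl(b_j^{(n)}\bigr)^2 = 1 + \tfrac2n a_j^{(n)} + O\!\bigl(\tfrac1{n^2}\bigr),
\]
so $|1 + \frac1n \mu_j^{(n)}| = 1 + \frac1n a_j^{(n)} + O(\frac1{n^2}) = 1 + \frac1n a_j + o(\frac1n)$, using $a_j^{(n)} \to a_j$. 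Taking the maximum over the finitely many indices $j = 1,\dots,d$, and using that $\max$ of finitely many quantities each of the form $1 + \frac1n a_j + o(\frac1n)$ equals $1 + \frac1n \max_j a_j + o(\frac1n)$, gives
\[
\rho\Bigl(I + \tfrac1n A_n\Bigr) = 1 + \tfrac1n \max_{1 \le j \le d} \Re \mu_j + o\Bigl(\tfrac1n\Bigr) = 1 + \tfrac1n \alpha(A) + o\Bigl(\tfrac1n\Bigr),
\]
which is the desired conclusion.

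The one point requiring a little care — and the only real obstacle — is the interchange of the $\max_j$ with the $o(1/n)$ error: a priori each eigenvalue carries its own error term $\epsilon_j(n)/n$ with $\epsilon_j(n) \to 0$, and since there are only finitely many of them (degree $d$ is fixed), $\max_j \epsilon_j(n) \to 0$ as well, so the combined error is genuinely $o(1/n)$. This is where it matters that the matrix size is fixed; the argument would not survive a growing dimension without additional uniformity. Everything else — the continuity of the spectral map under $\mu \mapsto 1 + \mu/n$, the continuity of polynomial roots, and the Taylor expansion of the modulus — is routine.
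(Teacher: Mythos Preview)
Your proof is correct and follows essentially the same approach as the paper: both use the spectral mapping $\sigma(I+\tfrac1n A_n)=\{1+\tfrac1n\mu:\mu\in\sigma(A_n)\}$, expand $|1+z|=1+\Re(z)+O(|z|^2)$ for small $z$, and then invoke continuity of the spectrum as $A_n\to A$. The paper packages the last step as continuity of the spectral abscissa $\alpha(A_n)\to\alpha(A)$, whereas you track the individual eigenvalues via continuity of polynomial roots and explicitly note the fixed dimension when passing the $o(1/n)$ through the maximum---but this is a difference in presentation, not in substance.
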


\begin{proof}
We may write $A_n\to A$ as $A_n=A+o(1)$.
Recall that for any complex number $z$, we have
$
\abs{1+z}
=
1+\Re(z)+O(\abs{z}^2)$
as $\abs{z}\to 0$.
We thus have
\begin{align*}
\rho\paren{I+\frac1nA_n}
&=
\sup_{\lambda\in \sigma(A_n)}\abs{1+\frac1n\lambda}
\\&=
\sup_{\lambda\in \sigma(A_n)}\paren{1+\frac1n\Re(\lambda)+O(n^{-2}\abs{\lambda}^2)}
\\&=
1+\frac1n\alpha(A_n)
+O(n^{-2} \rho(A_n)^2).
\end{align*}
As the sequence $\rho(A_n)$ must be bounded, the big-O term is in fact $O(n^{-2})$.
By continuity of the spectral abscissa, we additionally have $\alpha(A_n)=\alpha(A)+o(1)$.
Combining these asymptotics gives
\[
\rho\paren{I+\frac1nA_n}
=
1+\frac1n\alpha(A_n)
+O(n^{-2})
=
1+\frac1n \alpha(A)+o(n^{-1})
\]
as desired.
\end{proof}

We now prove \Cref{thm:main-jsr-bound}.
\begin{proof}[Proof of \Cref{thm:main-jsr-bound}]
Using \Cref{thm:ric-bound}, we have that
\begin{equation}\label{eqn:jsr-bound-step1}
    \rho(\mathscr M_{t/n})\leq \sup_{A\in \mathrm{RIC}(\mathscr{M}_{t/n})}\rho(A).
\end{equation}
We begin by characterizing $\mathrm{RIC}(\mathscr M_{t/n})$.
Recalling Equation \eqref{eqn:discretized-matrices}, each element $A\in \mathscr M_{t/n}$ is of the form
\[
\pmat{
e^{ M_0 t/n} + N_0 & N_1 &  \cdots & N_{m-1} & N_{m} \\
I & 0 & \cdots  & 0& 0 \\
0 & I & \cdots  & 0& 0 \\
\vdots& \vdots & \ddots & \vdots & \vdots \\
0 & 0 & \cdots & I & 0
}
\]
where $m=\ceil{nT/t}$, and the matrices $N_j$, $0\leq j\leq m$, may be written as
\[
N_j= M_0^{-1}(e^{ M_0 t/n}-I)\sum_{\substack{1\leq i \leq r\\n_i=j}}  M_i
\]
where the integers $n_1,\ldots,n_r\in \set{0,1,2,\ldots,m}$ are determined by the time delays $h$ on $[0,n/t]$, and where $M_0,\ldots,M_r$ are the stability matrices of the system \eqref{eqn:dde}.

Write $B_{i,n}=M_0^{-1}(e^{ M_0 t/n}-I) M_i$ for $1\leq i \leq r$.
To describe the row-independent closure of $\mathscr M_{t/n}$, define the matrices $B_{i,j,n}$ for $1\leq j \leq d$ as the $d\times d$ matrix whose $j$-th row is the $j$-th row of $B_{i,n}$, and all other entries are zero.
Note in each element $B\in \mathscr M_n$, each block $B_{i,n}$ for $1\leq i \leq r$ occurs in exactly one term in the top row of blocks in $B$.
Hence, if we consider an arbitrary element $\tilde{B}\in\mathrm{RIC}(\mathscr{M}_{t/n})$, as it is formed out of the rows of elements of $\mathscr M_{t/n}$, we may write
\begin{equation}\label{eqn:ric-matrix-form}
\tilde{B}=\pmat{
e^{\mathcal M_0 t/n} + \tilde{N}_0 & \tilde{N}_1 & \tilde{N}_2 &  \cdots & \tilde{N}_{m} \\
I & 0 & 0 & \cdots  & 0 \\
0 & I & 0 & \cdots  & 0 \\
\vdots& \vdots && \ddots & \vdots \\
0 & 0 & \cdots & I & 0
}
\end{equation}
where
\[
\tilde{N}_k=\sum_{n_{i,j}=k} B_{i,j,n}
\]
for some indices $n_{i,j}\in \set{0,\ldots,m}$, $1\leq i\leq r$, $1\leq j \leq d$.
Note that each row of each $B_{i,n}$ must occur as a term in some top-row block, but different rows from the same $B_{i,n}$ need not occur in the same block.
For notational convenience, write $B_{0,n}=e^{\mathcal M_0 t/n}$.

Observe that as $\mathscr M_{t/n}$ is a finite set, so is $\mathrm{RIC}(\mathscr{M}_{t/n})$; hence the supremum in \eqref{eqn:jsr-bound-step1} is attained for each $n$.
Let $A_n\in\mathrm{RIC}(\mathscr{M}_n)$ be a sequence of matrices that attain the supremum:
\[
\rho(A_n)=\sup_{A\in \mathrm{RIC}(\mathscr{M}_n)}\rho(A)\geq\rho(\mathscr M_n).
\]
As $A_n\in\mathrm{RIC}(\mathscr{M}_{t/n})$, it may be written in the form of Equation \eqref{eqn:ric-matrix-form}; hence using \Cref{thm:delay-matrix-isoradial} to compute its isoradial reduction with respect to the set $S=\set{1,\ldots,d}$, we have 
\[
\mathcal I_S(A_n)=B_{0,n}+\sum_{i=1}^r\sum_{j=1}^d \rho(A_n)^{-n_{i,j,n}} B_{i,j,n}
\]
and $\rho(A_n)
=
\rho\paren{
\mathcal I_S(A_n)
}
$.
Now consider the matrix 
$B_n
=B_{0,n}+\sum_{i=1}^r\sum_{j=1}^d B_{i,j,n}$; note that this corresponds to the sum of top-row blocks in \Cref{thm:delay-matrix-isoradial} for $A_n$.
Note that $B_{0,n}=e^{M_0{t/n}}=I+\frac tn M_0+O(n^{-2})$, and for $1\leq i \leq r$ we have
\[
B_{i,n}=M_0^{-1}(e^{M_0{t/n}}-I)M_i
=
\frac tn M_i+O(n^{-2}).
\]
Hence we have
\[
B_n
=B_{0,n}+\sum_{i=1}^r\sum_{j=1}^d B_{i,j,n}
=\sum_{i=0}^r B_{i,n}
=I+\frac tn \sum_{i=0}^r M_i+O(n^{-2})
=I+\frac{t}{n}\mathcal M+O(n^{-2})
\]
where $\mathcal M=M_0+\ldots+M_r$.
Since $\alpha(t\mathcal M)=t\alpha(\mathcal M)<0$ by Condition \eqref{eqn:stabcond}, it follows from \Cref{thm:spec-rad-asymptotic} that $\rho(B_n)<1$ for all sufficiently large $n$.
\Cref{thm:delay-matrix-isoradial} then gives that $\rho(B_n)\leq \rho(A_n)<1$ for all such $n$.
Moreover, as clearly $\rho(B_n)\to 1$ as $n\to\infty$, we must have $\rho(A_n)\to 1$ also.

For simplicity of notation, label $M_D=\sum_{i=1}^r M_i$, i.e. all of the delayed parts.

Define the sequence $\gamma_n$ by
\[
\gamma_n=\frac nt (1-\rho(A_n)),
\]
so that $
\rho(A_n)=1-\frac{\gamma_n t}{n}$.
If $\gamma_n\to \infty$, it is clear that the claim of the theorem holds for any $\gamma_0>0$; hence it suffices to consider the case where $\gamma_n\not\to\infty$.
Let
\[
\gamma_*=\sup\set{\gamma>0:\,\text{$\rho(A_n)\leq 1-\frac{\gamma t}{n}$ for all sufficiently large $n$}}=\liminf_{n\to\infty} \gamma_n.
\]
Note $\gamma_n>0$ for all $n$; as moreover $\gamma_n\not\to\infty$, $\gamma_*$ is well-defined and nonnegative.

To complete the proof, it suffices to show that $\gamma_*\geq \gamma_0>0$, where we recall $\gamma_0$ is given by
\begin{equation}\label{eqn:growth-rate-ver2}
\gamma_0=\inf\set{\gamma\in\R:\,
    \det\paren{\gamma I+M_0+e^{\gamma T}M_D
    }=0.
    }
    \end{equation}
First we show that $\gamma_0$ is the unique real root of the function
    \[
    g(\lambda)=\lambda+\alpha\paren{M_0+e^{\lambda T}M_D}.
    \]
    Note that $M_0+e^{\lambda T}M_D$ is a Metzler matrix for any $\lambda\in \R$, and hence its spectral abscissa $\alpha(M_0+e^{\lambda T}M_D)$ is monotonic nondecreasing in each entry of the matrix; in particular, the abscissa is nondecreasing in $\lambda$.
    Hence $g(\lambda)$ is a strictly increasing function and has at most one real root.
    Next, monotonicity also implies that $g(\lambda)\geq \lambda+\alpha(M_0)$, so in particular $g(\lambda)\to\infty$ as $\lambda\to\infty$.
    Finally, observe that $g(0)=\alpha(M_0+M_D)<0$ by assumption.
    Hence $g(\lambda)$ has a unique real root $\lambda_0$, and $\lambda_0>0$.

    Now, note that we may also write
\[
    g(\lambda)=\alpha\paren{\lambda I+M_0+e^{\lambda T}M_D};
\]
hence the matrix $\lambda_0I+M_0+e^{\lambda_0T}M_D$ has a zero eigenvalue, so $\det(\lambda_0I+M_0+e^{\lambda_0T}M_D)=0$.
Next, observe that if $\lambda<\lambda_0$, then we have $g(\lambda)<0$, so
\[
\alpha(M_0+e^{\lambda T}M_D)<-\lambda;
\]
in particular, $-\lambda$ cannot be an eigenvalue of $M_0+e^{\lambda T}M_D$, and thus $\det(\lambda I + M_0+e^{\lambda T}M_D)\neq 0$.
Thus $\gamma_0=\lambda_0>0$ is the unique root of $g(\lambda)$.

Now, take a convergent subsequence $\gamma_{n_k}\to \gamma_*$, which we will relabel as $\gamma_k$.
Along this subsequence, we have
\begin{align*}
\gamma_k&=
\frac {n_k}t (1-\rho(A_k))
\\&
=
-\frac {n_k}t \alpha(\mathcal I_S(A_k)-I)
\\&=
-\alpha\paren{
\frac {n_k}t\paren{B_{0,k}-I}
+\sum_{i=1}^r
\sum_{j=1}^d \rho(A_k)^{-n_{i,j,k}} \frac nt B_{i,j,k}
}
\end{align*}
Now, note that $0\leq n_{i,j,k}\leq {n_k T}/{t}$; since $\rho(A_k)<1$, we thus have $\rho(A_k)^{-n_{i,j,k}}\leq \rho(A_k)^{- n_k T/t}$.
Using that $\mathcal I_S(A_k)-I$ is a Metzler matrix and hence $\alpha(\mathcal I_S(A_k)-I)$ is monotonic in the matrix's entries, we thus have
\begin{align*}
\gamma_k
&\geq
-\alpha\paren{
\frac {n_k}t\paren{B_{0,k}-I}
+\rho(A_k)^{-n_kT/t}\sum_{i=1}^r\frac {n_k}t B_{i,k}
}
\\&=-\alpha( M_{0,k}+e^{\gamma_k T}M_{D,k})
\end{align*}
where 
\[
M_{0,k}=
\frac {n_k}t\paren{B_{0,k}-I},
\quad\quad
M_{D,k}=\rho(A_k)^{-n_kT/t} e^{-\gamma_k T}\sum_{i=1}^r \frac {n_k}t B_{i,k}.
\]
Note that from the definition of $B_{0,k}$, it is the case that 
$M_{0,k}=\frac {n_k}t (e^{M_0t/n_k}-I)=M_0+O(n_k^{-1})\to M_0$ as $k\to\infty$.
Since $\rho(A_k)=1-{\gamma_k t}/{n_k}$ we also compute
\[
\rho(A_k)^{- n_k T/t}
=
\exp\paren{
-\frac{n_k T}{t}
\log\paren{
1-\frac{\gamma_k t}{n_k}
}
}
=
\exp\paren{
\gamma_k T
+O(n_k^{-1})
}
=
e^{\gamma_k T}(1+O(n_k^{-1}))
\]
as $\gamma_k$ is bounded.
Hence
\[
M_{D,k}=\rho(A_k)^{-n_kT/t} e^{-\gamma_k T}\frac nt M_0^{-1}(e^{M_0t/n_k}-I)M_D
=
(1+O(n_k^{-1})(M_D+O(n_k^{-1}))
\to M_D
\]
as $k\to\infty$ also.
Hence by continuity, we have
\[
g(\gamma_*)=\gamma_*+\alpha(M_0+e^{\gamma_* T}M_D)
=
\lim_{k\to\infty} \gamma_k+\alpha(M_{0,k}+e^{\gamma_k T}M_{D,k})\geq 0.
\]
The monotonicity of $g$ thus implies that $\gamma_*\geq \gamma_0$ as desired.
\end{proof}

\section{Dynamical Applications and Additional Results}\label{sec:corollaries}

In this section we derive several consequences of our main stability results, clarifying how they specialize to equilibrium, periodic, and local regimes. We first show that autonomous systems satisfying the intrinsic stability condition admit a unique globally exponentially attracting equilibrium that is effectively independent of the system's delay structure (Corollary~\ref{thm:mr-corollary1}). We then extend this to periodic nonautonomous systems, obtaining a unique globally attracting periodic solution with matching period (Corollary~\ref{thm:mr-corollary2}). Finally, we establish a local stability criterion for multistable systems, etc. via linearization at equilibria, yielding delay-independent local exponential stability conditions (Corollary~\ref{thm:mr-corollary3}).

Main Result~\ref{mr1} assumes both intrinsic stability and the existence of an equilibrium. In the autonomous case, however, the existence of an equilibrium follows automatically from the intrinsic stability condition. In particular, the system admits a unique equilibrium, which is globally exponentially stable and independent of the time delays.

\begin{corollary}[Existence and Uniqueness of Equilibria]\label{thm:mr-corollary1}
Suppose that $\alpha(M_0+\cdots+M_r)<0$ holds for the DDE~\eqref{eqn:dde}.
If $f$ is autonomous, then \eqref{eqn:dde} admits a unique equilibrium, which is globally exponentially attracting.
\end{corollary}

\begin{proof}
Observe that it suffices to show the existence of a fixed point for any choice of time delays; hence we may consider constant time delays.
As the time delays are thus also bounded, Main Result 3 implies there are $C>0,\gamma>0$ such that
    \begin{equation}\label{eqn:contraction-mapping}
        \norm{S^h_t[\phi_1]-S^h_t[\phi_2]}_{C^0}\leq Ce^{-\gamma t}\norm{\phi_1-\phi_2}_{C^0}.
    \end{equation}
    If we fix $t$ large enough that $Ce^{-\gamma t}<1$, then Equation \eqref{eqn:contraction-mapping} implies $S^h_t$ is a contraction map.
    Hence the contraction mapping principle implies there is a unique fixed point $\phi_*\in C([-T,0];\R^d)$ of $S^h_t$.
    Note that for any $t'>0$,
    \[
    S^h_t[S^h_{t'}[\phi_*]]=S^h_{t'}[S^h_{t}[\phi_*]]=S^h_{t'}[\phi_*]
    \]
    so $S^h_{t'}[\phi_*]=\phi_*$ by uniqueness of the fixed point.
    However, for $t'<T$ and $s<-t'$, $S^h_{t'}[\phi_*](s)=\phi_*(s+t')$; this implies $\phi_*$ is constant, so $x_*=\phi_*(0)$ is a fixed point of Equation \eqref{eqn:dde}, which is globally exponentially attracting by Main Result 1.
\end{proof}

When both $f$ and the time delays $h(t)$ are periodic with a common period, the system admits a \textit{globally attracting periodic solution with the same period.} Unlike in Corollary~\ref{thm:mr-corollary1}, this periodic solution may depend on the time-delay functions.

\begin{corollary}[Existence of Periodic Solutions]\label{thm:mr-corollary2}
Assume $\alpha(M_0+\cdots+M_r)<0$ and that the time delays $h$ belong to $\overline{\mathrm{LI}}$. If $f$ and $h$ are periodic in $t$ with a common period $P>0$, then \eqref{eqn:dde} admits a unique $P$-periodic solution, which is globally exponentially attracting.
\end{corollary}

\begin{proof}
    As in the proof of \Cref{thm:mr-corollary1} we apply Main Result 3 to obtain the existence of $C>0,\gamma>0$ such that
    \[
    \norm{S^h_t[\phi_1]-S^h_t[\phi_2]}_{C^0}\leq Ce^{-\gamma t}\norm{\phi_1-\phi_2}_{C^0}.
    \]
    Now, choose an integer $k\geq 1$ such that $Ce^{-\gamma kP}<1$;
    then $S^h_{kP}$ is a contraction mapping, and hence has a unique fixed point $\phi_*(t)\in C([-T,0];\R^d)$.
    Since $S^h_{kP}=(S^h_P)^k$ the $k$-fold composition, we have
    \[
    S^h_{kP}[S^h_P[\phi_*]]=S^h_{P}[S^h_{kP}[\phi_*]]=S^h_P[\phi_*],
    \]
    so $S^h_P[\phi_*]=\phi_*$ by uniqueness of the fixed point.
    Letting $x_*(t)$ be the solution with initial condition $\phi_*$, the equality $S^h_P[\phi_*]=\phi_*$ implies that $x_*(t)$ is periodic with period $P$.
    Main Result 3 then implies that this periodic orbit is globally exponentially attracting, which in particular also implies uniqueness.
\end{proof}

Many systems are multistable, having more that one stable fixed point.
In this setting, determining whether stability persists in the presence of time delays requires a local version of the global stability framework developed above.

Let $x_*$ be an equilibrium of \eqref{eqn:dde}. We define the \emph{local stability matrices} $M_0(x_*),\ldots,M_r(x_*)$ by
\[
M_0(x_*) = \sup_{t\geq 0}\mathrm{abs}^*\bigl(D_x f(t,x_*,\ldots,x_*)\bigr),
\quad\quad
M_i(x_*)=\sup_{t\geq 0}\bigl|D_{y_i}f(t,x_*,\ldots,x_*)\bigr|.
\]
These definitions are directly analogous to those of the global stability matrices in Equation~\eqref{eqn:stabmat}. 
As in the global setting (cf. Main Result~\ref{mr1}), these matrices provide a linearized description of the dynamics near $x_*$.

Recall that an equilibrium $x_*$ is \emph{locally stable} if every solution with initial condition in a $C^0$-neighborhood of $x_*$ satisfies $x(t)\to x_*$ as $t\to\infty$. It is \emph{locally exponentially stable} if this convergence occurs at an exponential rate.

\begin{corollary}[Local Intrinsic Stability]\label{thm:mr-corollary3}
Let $x_*$ be a fixed point of \eqref{eqn:dde}, and assume that $f$ is $C^2$ in a neighborhood of $x_*$.
If
\[
\alpha\big(M_0(x_*) + M_1(x_*) + \cdots + M_r(x_*)\big) < 0,
\]
then $x_*$ is locally exponentially stable for \eqref{eqn:dde} for any uniformly continuous time-delays $h(t)$.
\end{corollary}

\begin{proof}
    Without loss of generality suppose $x_*=0$.
    Label $f=f(t,x,y)$ where $y=(y_1,\ldots,y_r)\in \R^{rd}$.
    By standard constructions using bump functions, for any $r>0$ there is a $C^2$ function $g=g(t,x,y)$ such that the following hold:
    \begin{enumerate}[label=(\roman*)]
        \item inside $B_r(0)$, $g(t,x,y)=f(t,x,y)$;
        \item outside $B_{2r}(0)$, $g(t,x,y)=D_xf(t,0,0)x+D_yf(t,0,0)y$;
        \item for all $t,x,y$, $D_{(x,y)}g(t,x,y)=D_{(x,y)}f(t,x,y)+O(r^{-1})$ uniformly.
    \end{enumerate}
    Considering the DDE
    \begin{equation}\label{eqn:dde-local}
        \hat x'(t)=g(t,\hat x(t),\Delta_h \hat x(t)),
    \end{equation}
    note that $x_*=0$ is also a fixed point of \eqref{eqn:dde-local}.
    It is clear by property (iii) that the stability matrices of the system \eqref{eqn:dde-local} will be within $O(r^{-1})$ of the stability matrices of \eqref{eqn:dde}.
    Hence, for sufficiently small $r$, the system \eqref{eqn:dde-local} satisfies the conditions of Main Results 1, and $x_*$ is globally attracting for \eqref{eqn:dde-local}.
    In particular, there is a trapping neighborhood $U$ of $x_*$ contained within $B_r(0)$.
    Since within $B_r(0)$, the systems \eqref{eqn:dde} and \eqref{eqn:dde-local} are the same, any solution to \eqref{eqn:dde-local} with initial condition $\phi\in U$ will also be a solution to \eqref{eqn:dde}; hence $x_*$ is locally exponentially stable for \eqref{eqn:dde}.
\end{proof}

This proof also shows that the local exponential convergence rate can be computed analogously to Equation~\eqref{eqn:growth-rate2}.

\begin{corollary}[Local Convergence Rates]
Under the assumptions of \Cref{thm:mr-corollary3}, the exponential convergence rate $\gamma_0$ is given by \eqref{eqn:growth-rate2} with $M_i$ replaced by the local matrices $M_i(x_*)$. Explicitly,
\[
\gamma_0=\inf\Bigl\{\gamma\in \mathbb{R}:\,
\det\!\Bigl(\gamma I+M_0(x_*)+e^{\gamma T}\sum_{i=1}^r M_i(x_*)\Bigr)=0
\Bigr\}.
\]
Specifically, for the initial condition $\phi$ in a neighborhood of $x_*$, for any $0<\gamma<\gamma_0$,
\[
\|x(t)-x_*\|\leq C e^{-\gamma t}\sup_{s\leq 0}\|\phi(s)-x_*\|.
\]
\end{corollary}

\section{Application to Reservoir Computing}\label{sec:application}
We illustrate an application of intrinsic stability to reservoir computing, a class of machine learning methods for learning dynamical systems from time-series data.
Modern reservoir computers, developed from echo-state networks, have been successfully applied to attractor reconstruction \cite{chen2022}, forecasting \cite{pathak2018model}, classification \cite{bianchi2021}, and reconstruction from partial observations \cite{nakai2018}.

Continuous-time reservoir computers are typically constructed using a high-dimensional nonlinear ODE
\begin{equation}\label{eqn:reservoir-untrained}
x'(t)=F(x(t),u(t)),
\end{equation}
where $u(t)\in\mathbb{R}^m$ is an input signal on $I=[0,T]$. The state $x(t)\in\mathbb{R}^n$ is the reservoir response. From this response, a readout map $W:\mathbb{R}^n\to\mathbb{R}^m$ is trained so that $W(x(t))\approx v(t)$ for a target signal $v(t)$, typically via regularized least squares \cite{tavakoli2024delayedrc,chen2022}.

One property associated with high-performing reservoir computers is consistency, introduced in \cite{consistency2019}. 
Consistency quantifies the extent to which the reservoir response is determined by the input signal rather than by sensitivity to initial conditions or internal instability.

To define reservoir consistency, consider two solutions $x(t)$ and $y(t)$ of the same reservoir system driven by the same input $u(t)$ but with different initial conditions $x(0)\neq y(0)$. Over a time interval $[0,T]$, the consistency is measured by the mean Pearson correlation
\[
\gamma^2(T)=\frac{1}{nT}\sum_{i=1}^n\int_0^T\frac{(x_i(t)-\bar{x}_i(T))(y_i(t)-\bar{y}_i(T)}{\sigma_{i,x}(T)\sigma_{i,y}(T)}\,dt,
\]
where $\bar{x}_i(T),\bar{y}_i(T)$ denote time averages and $\sigma_{i,x}(T),\sigma_{i,y}(T)$ the corresponding standard deviations on the interval. The reservoir is said to be \textit{consistent} if $\gamma^2(T)\approx 1$ for all choices of initial conditions, meaning trajectories driven by the same input are highly correlated over $[0,T]$.

Recently, the authors of \cite{tavakoli2024delayedrc} showed that incorporating time delays into reservoir dynamics can improve performance, enabling effective classification of low-dimensional systems. 
They considered the delayed reservoir
\begin{equation}\label{eqn:reservoir-delayed-2}
\begin{aligned}
x_1'(t)=-x_1(t)-\delta x_2(t)+\frac{\beta}{M}&\sum_{i=1}^M\sin^2(x_1(t-\tau_i)+\phi+\gamma J(t))
\\
x_2'(t)&=x_1(t)
\end{aligned}
\end{equation}
where $x_1(t),x_2(t)\in\mathbb{R}$, $\beta,\delta,\phi,\gamma\in\mathbb{R}$ are parameters, $\tau_1,\ldots,\tau_M$ are fixed delays, and $J(t)$ encodes the input signal.

We show that, under mild assumptions on the input, the reservoir \eqref{eqn:reservoir-delayed-2} is consistent, even with time-varying delays. To establish this, we impose conditions on the input signal $J(t)$ and its effect on the system dynamics; these are typically satisfied in practice. In particular, $J(t)$ is usually sufficiently irregular to prevent convergence issues, and the reservoir admits no fixed points unless $J(t)$ is constant on an interval.

\begin{theorem}[Consistency of Delayed Reservoirs]\label{thm:consistency-delayed-2}
Let the delayed reservoir \eqref{eqn:reservoir-delayed-2} be driven by the input signal $J(t)$. Suppose there exists a bounded trajectory $x(t)$ for which the component means $\bar{x}_i(T)$ and 
standard deviations $\sigma_i(T)$ converge as $T\to\infty$ to $\bar{x}_i$ and $\sigma_i$, respectively, with $\sigma_i \neq 0$. If 
$$0<\beta<\tfrac{1}{2} \ \text{and} \ 0<\delta<\tfrac{1}{4}-\beta^2,$$ 
then the reservoir is consistent for any bounded uniformly continuous time delays $\tau_1,\ldots,\tau_M$.
\end{theorem}

\begin{proof}
We first show that \Cref{mr3} applies to the reservoir \eqref{eqn:reservoir-delayed-2}.
Write the right-hand-side of Equation \eqref{eqn:reservoir-delayed-2} in the notation \eqref{eqn:dde} as
\[
F(t,x,y_1,\ldots,y_k)=\pmat{
-x_1-\delta x_2+\frac{\beta}{k}\sum_{i=1}^k\sin^2(y_{i,1}+\phi+\gamma J(t))
\\
x_1
}.
\]
For notational simplicity, set $\Delta=\sqrt{1-4\delta}$, so $\delta=\frac14(1-\Delta^2)$.
Since $0<\delta<\frac14-\beta<\frac14$, we have $0<\Delta<1$; note also that $\delta<\frac14-\beta$ is equivalent to $2\beta<\Delta$.
We begin by making a change of variables.
Using the matrix 
\[
S=\pmat{\frac{-1+\Delta}2&\frac{-1-\Delta}2\\1 & 1},
\]
set $z(t)=S^{-1}x(t)$.
Then $z(t)$ satisfies the DDE 
\begin{equation}\label{eqn:reservoir-cov}
\begin{aligned}
z(t)=G(t,z(t),&z(t-\tau_1),\ldots,z(t-\tau_k)),
\\
G(t,z,w_1,\ldots,w_k)=&S^{-1}F(t,Sz,Sw_1,\ldots,Sw_k).
\end{aligned}
\end{equation}
The matrix $S$ is chosen to diagonalize $D_x F$.
We compute the intrinsic stability matrix for Equation \eqref{eqn:reservoir-cov}; as $S$ induces a bi-uniformly continuous conjugacy between the systems \eqref{eqn:reservoir-delayed-2} and \eqref{eqn:reservoir-cov}, the results of \Cref{mr3} can be passed between the two systems.
One may then verify that
\[
D_zG=SD_xFS^{-1}
=
\pmat{
\frac{-1-\Delta}2 & 0 \\ 0 & \frac{-1+\Delta}2
},
\]
\[
D_{w_i}G=SD_{y_i}FS^{-1}
=
\frac{\beta}{2k\Delta}\pmat{1+\Delta & 1-\Delta \\ -1-\Delta & -1+\Delta}\sin(2w_{i,1}+2\phi+2\gamma J(t)).
\]
Hence 
\[
M_0=\pmat{\frac{-1-\Delta}2&0\\0&\frac{-1+\Delta}2},
\quad\quad
M_i
= \frac{\beta}{2k\Delta}\pmat{1+\Delta & 1-\Delta \\ 1+\Delta & 1-\Delta}
\]
for $1\leq i \leq k$.
The sum of the stability matrices is thus given by
\[
\mathcal M=M_0+M_1+\ldots + M_k
=
\pmat{
\frac{-1-\Delta}2 & 0 \\ 0 & \frac{-1+\Delta}2
}
+
\frac{\beta}{2\Delta}\pmat{1+\Delta & 1-\Delta \\ 1+\Delta & 1-\Delta}.
\]
For parameters in the range considered, the eigenvalues of this matrix are both real, and the larger is
\[
\alpha(\mathcal M)
=
\frac12\paren{-1+\frac\beta{\Delta}+\sqrt{
\frac{\beta^2}{\Delta^2}-2\beta\Delta+\Delta^2
}}.
\]
It follows that for $0<\Delta\leq 1$, $\alpha(\mathcal M)=0$ if $\Delta=2\beta$ or $\Delta=1$, $\alpha(\mathcal M)>0$ for $\Delta\in(0,2\beta)$, and $\alpha(\mathcal M)<0$ for $\Delta\in(2\beta,1)$.
Hence, under our assumptions on $\Delta$ and $\beta$, $\alpha(\mathcal M)<0$.
Thus, by \Cref{mr3}, the reservoir \eqref{eqn:reservoir-cov} has exponential independence to initial conditions, which as argued above also applies to the reservoir  \eqref{eqn:reservoir-delayed-2}.

Now let $y(t)=(y_1(t),y_2(t)),z(t)=(z_1(t),z_2(t))$ be two responses to the same input signal $u(t)$ as $x(t)$.
By exponential convergence of trajectories it follows that $x(t)-y(t)=O(e^{-\gamma_0 t})$ and $x(t)-z(t)=O(e^{-\gamma_0 t})$.
In particular, this implies $\bar y_i(T),\bar z_i(T)\to \mu_i$ and $\sigma_{i,y}(T),\sigma_{i,z}(T)\to \sigma_i$.
By our assumptions on the trajectory, as $T\to\infty$,
\[
\gamma^2(T)
=
\frac{1}{nT}\sum_{i=1}^n\int_0^T\frac{(x_i(t)-\bar{x}_i(T))(y_i(t)-\bar{y}_i(T))}{\sigma_{i,x}(T)\sigma_{i,y}(T)}\,dt
\]\[
=
\frac{1}{nT}\sum_{i=1}^n\frac1{\sigma_i^2+O(1)}\int_0^T\paren{\paren{x_i(t)-\mu_i}^2+O(1)}
\,dt
\to
1
\]
and the reservoir is consistent.
\end{proof}

Figure~\ref{fig:delayed-reservoir} demonstrates consistency of the delayed reservoir \eqref{eqn:reservoir-delayed-2}. The input $J(t)$ is taken from the $x$-coordinate of the Lorenz attractor over $[0,30]$, with constant delays $\tau_1=0.4$, $\tau_2=0.7$, and $\tau_3=1.0$. 
We note that intrinsic stability can be used to derive consistency criteria for other reservoir architectures.

\begin{figure}
\center
\includegraphics[width=4.5in]{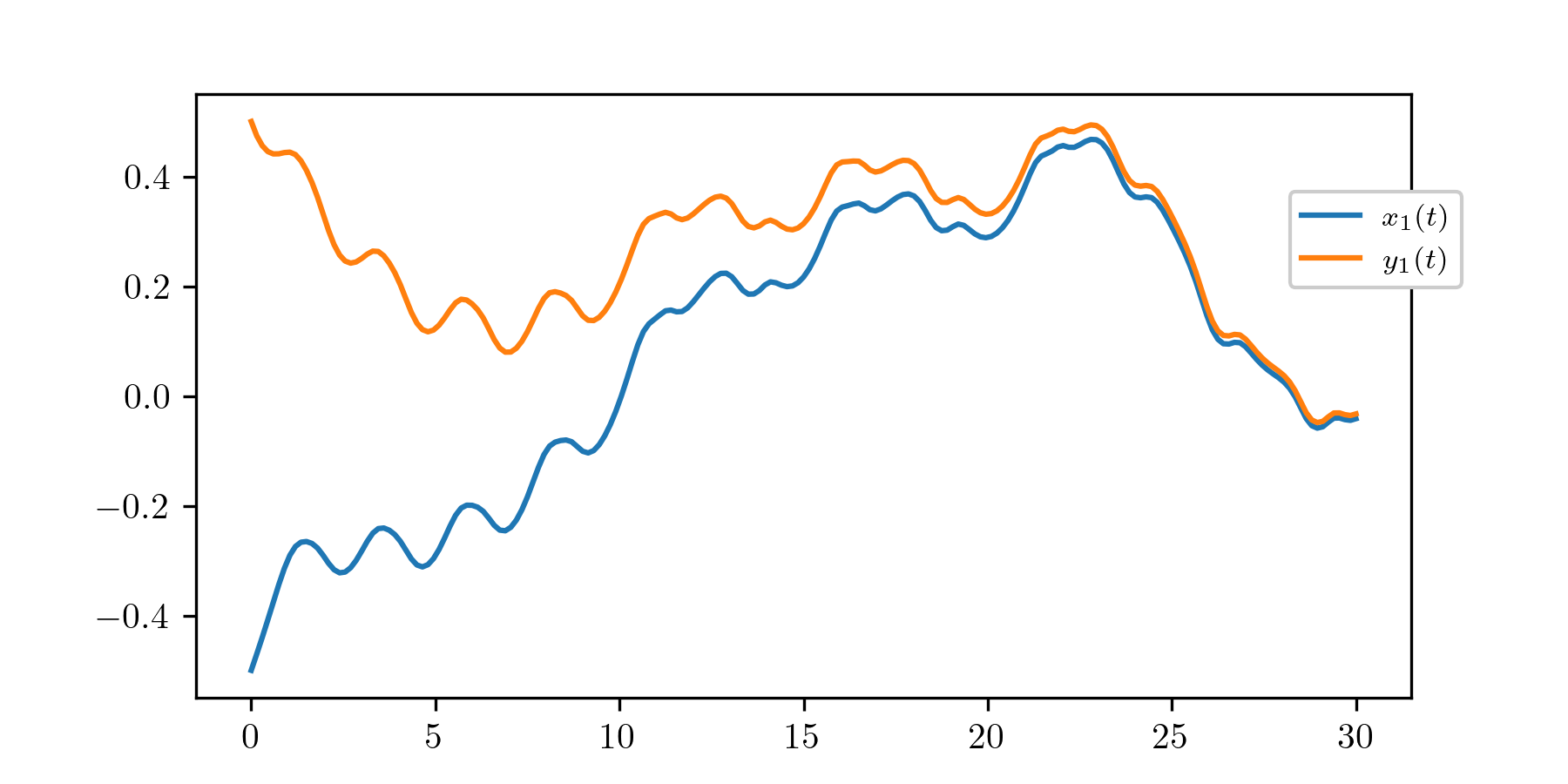}
\caption[Example of delayed reservoir computer consistency]{
Consistency of the delayed reservoir \eqref{eqn:reservoir-delayed-2}. The curves show the first components $x_1(t)$ and $y_1(t)$ of two trajectories for different initial conditions, which converge over time. The parameters $\beta=1/3$ and $\delta=1/8$ satisfy \Cref{thm:consistency-delayed-2} \textcolor{black}{guaranteeing consistency. Here $k=3$ with delays} $\tau_1=0.4$, $\tau_2=0.7$, $\tau_3=1.0$, for $\phi=1$ and $\gamma=7$. The input $J(t)$ is derived from the Lorenz attractor \cite{Lorenz1962}.
}\label{fig:delayed-reservoir}
\end{figure}

\section{Conclusion}\label{sec:conclusion}

This work extends the theory of intrinsic stability from discrete-time systems to a broad class of continuous-time, nonlinear, nonautonomous delay differential equations. We established a delay-independent criterion for exponential stability based on the spectral properties of associated stability matrices, together with explicit bounds on convergence rates. To our knowledge, this is the first general result of this type for nonlinear DDEs.

A key feature of the proposed framework is its computational simplicity: in contrast to Lyapunov-based or LMI methods, the stability criterion depends only on matrix spectra, which are typically inexpensive to evaluate. This makes the approach both theoretically transparent and accessible.

Beyond global stability, we demonstrate how this framework yields other results, including the existence of globally attracting equilibria and periodic solutions, as well as delay-independent local stability conditions. As an application, we analyzed consistency in delayed reservoir computing systems, illustrating how intrinsic stability provides a natural tool for assessing input-driven dynamics in machine learning models.

Several directions for future work remain. First, it would be of interest to extend the theory to more general classes of delays, including distributed, state-dependent, and stochastic delays. Second, the relationship between our stability condition and more general switching criteria (see e.g., \cite{sun2012delayindep}) suggests the possibility of extending the present results to broader classes of nonlinear systems. Finally, it remains an open question whether the intrinsic stability condition is necessary for delay-independent stability, modulo the coordinate dependence of the stability matrices (compare, for instance, the proof of Theorem \ref{thm:consistency-delayed-2}). Understanding this issue may shed light on analogous open problems in the discrete-time setting \cite{Reber_2020}.

\section*{Funding}
B.Z.W. was partially supported by the NSF grant \#2205837 in Applied Mathematics.

\bibliographystyle{elsarticle-num} 
\bibliography{main}

\setcounter{section}{0}
\renewcommand{\thesection}{Appendix \Alph{section}}
\section{Additional proofs}
\renewcommand{\thesection}{\Alph{section}}
In this appendix are recorded proofs of several propositions and lemmas with long proofs whose ideas are not central to the sections they appear in.

\begin{proof}[Proof of \Cref{thm:continuity}]
    Each of the items (i)-(iv) follows by a direct application of Gronwall's inequality, in a manner very similar to the same statements for ODEs.
    We show the proof only of (iv), as it is the only statement without an ODE analogue. 
    Let $y_1(t)$ and $y_2(t)$ be the solutions to Equation \eqref{eqn:dde} with initial condition $\phi$ and time delays $g$ and $h$ respectively; then 
    \[
    y_1(t)=\begin{cases}
        S^g_t[\phi](0) & t\geq 0
        \\
        \phi(t) & t\leq 0
    \end{cases}
    \]
    and similar for $y_2$.
    Using Lipschitz continuity of $f$ (including in the variable $t$) it is straightforwards to show via Gronwall's inequality that
    \begin{equation}\label{eqn:cont-part1}
     \norm{S^h_t[\phi]}_{C^0}\leq Ce^{Lt}(1+\norm{\phi}_{C^0}).
    \end{equation}
    Using \eqref{eqn:cont-part1} with Lipschitz continuity and relabeling constants as needed, we may then estimate 
    \begin{align*}
    \Lip(S^h_t[\phi])
    &\leq \Lip(\phi)+\sup_{0\leq s\leq t}\norm{y_2'(s)}
    \\&= \Lip(\phi)+\sup_{0\leq s\leq t}\norm{f(s,y_2(s),\Delta_h y_2(s))}
    \\&\leq \Lip(\phi)+Lt+L(r+1)\norm{S^h_t[\phi]}_{C^0}
    \\&\leq Ce^{L't}(1+\norm{\phi}_{C^0,1}).\eqlabel{eqn:cont-part2}
    \end{align*}
    We may apply \eqref{eqn:cont-part2} to then show
    \begin{align*}
        \AnchorRight\norm{f(t,y_1(t),\Delta_gy_1(t))-f(t,y_2(t),\Delta_hy_2(t))}
        \\&\leq
        L\norm{(y_1(t),\Delta_g y_1(t))-(y_2(t),\Delta_g y_2(t))}
        +L\norm{\Delta_gy_2(t)-\Delta_h y_2(t)}
        \\&\leq
        L(r+1)\norm{S^g_t[\phi]-S^h_t[\phi]}_{C^0}+L\Lip(S^h_t[\phi])\norm{g(t)-h(t)}
        \\&\leq
        L'\norm{S^g_t[\phi]-S^h_t[\phi]}_{C^0}+Ce^{L't}(1+\norm{\phi}_{C^{0,1}})\norm{g(t)-h(t)}.\eqlabel{eqn:cont-part3}
    \end{align*}
    This allows us to then estimate
    \begin{align*}
        \norm{S^g_t[\phi]-S^h_t[\phi]}_{C^0}&=\sup_{t-T\leq s \leq t}\norm{y_1(s)-y_2(s)}
        \\&\leq \int_0^t \norm{f(s,y_1(s),\Delta_gy_1(s))-f(t,y_2(s),\Delta_hy_2(s))}\,ds
        \\&\leq
        L' \int_0^t\norm{S^g_s[\phi]-S^h_s[\phi]}_{C^0}
        +\norm{g-h}_{L^\infty(0,t)}\frac C{L'}(1+\norm{\phi}_{C^0,1}) e^{L't}\eqlabel{eqn:cont-part4}
    \end{align*}
    whence Gronwall's inequality implies
    \begin{equation}\label{eqn:cont-part5}
    \norm{S^g_t[\phi]-S^h_t[\phi]}_{C^0}
    \leq
    C'\norm{g-h}_{L^\infty(0,t)}(1+\norm{\phi}_{C^0,1})e^{2L't}.
    \end{equation}
    To complete statement (iv), it then suffices to additionally consider
    \begin{align*}
    \Lip(S^g_t[\phi]-S^h_t[\phi])&=\sup_{t-T\leq s \leq t}\norm{y_1'(s)-y_2'(s)}
    \\&=\sup_{t-T\leq s \leq t}\norm{f(s,y_1(s),\Delta_gy_1(s))-f(t,y_2(s),\Delta_hy_2(s))}
    \end{align*}
    and apply \eqref{eqn:cont-part3} and \eqref{eqn:cont-part5}.
\end{proof}

\begin{proof}[Proof of \Cref{thm:discretization-spectrum}] Write $R=R_1\cdots R_m$ and $A=A_1\cdots A_m$.
Since $\pi_\tau\circ R_i=A_i\circ\pi_\tau$ for each $1\leq i \leq m$, it follows inductively that $\pi_\tau \circ R=A\circ \pi_\tau$.
Choose $\ell\geq 1$ large enough that $\ell m\tau \geq T$.
Noting that $R^\ell\in \Sigma_\tau(\ell m\tau)$,
\Cref{thm:discretization-kernel} implies that
\[
\ker(\pi_\tau)\subseteq \ker (R^\ell).
\]
Since $\ker(\pi_\tau)$ has finite codimension $n_\tau$, the range of $R^\ell$ must be finite-dimensional and so $R^\ell$ is compact.
By \cite[Chapter VII.4, Theorem 6]{dunford1988linearpart1},
all nonzero elements of the spectrum of $R$ are eigenvalues, or elements of the point spectrum $\sigma_p(R)$, so
$\sigma(R)=\sigma_p(R)\cup\set{0}$.
To prove the claim, it suffices to show that
\[
\sigma_p(R)\cup\set{0}
=
\sigma(A)\cup\set{0}.
\]
First, let $\lambda\in \sigma_p(R)$ be an arbitrary eigenvalue with corresponding eigenfunction $x$, so $Rx=\lambda x$.
If $x\in \ker(\pi)$, we have that $x\in \ker(R^\ell)$ also, so 
\[
0
=
R^\ell x
=
\lambda^\ell x,
\]
implying that $\lambda=0$.
Otherwise, $\pi_\tau x\neq 0$, and we have
\[
A\pi_\tau x
=
\pi_\tau Rx
=
\pi_\tau \lambda x
=
\lambda \pi_\tau x,
\]
so $\pi_\tau x$ is an eigenvector of $A$ with eigenvalue $\lambda$.
Thus $\sigma_p(R)\cup\set{0}\subseteq\sigma(A)\cup\set{0}$.

To show the reverse inclusion, let $\lambda\in \sigma(A)$ be nonzero with corresponding eigenvector $v$.
Since $\pi_\tau$ is surjective, there exists some $x$ with $\pi_\tau x=v$.
We claim $R^\ell x$ is an eigenfunction of $R$ with eigenvalue $\lambda$.
To see this, observe that
\[
\pi_\tau(Rx-\lambda x)
=
A\pi_\tau x-\lambda \pi_\tau x
=
Av-\lambda v
=
0,
\]
so $Rx-\lambda x\in \ker(\pi_\tau)\subseteq \ker(R^\ell)$.
Thus,
\[
R(R^\ell x)-\lambda(R^\ell x)
=
R^\ell (Rx-\lambda x)=0.
\]
Moreover,
\[
\pi R^\ell x
=
A^\ell \pi x
=
A^\ell v
=
\lambda^\ell v\neq 0,
\]
so $R^\ell x\neq 0$, and $R^\ell x$ is an eigenfunction of $R$ with eigenvalue $\lambda$.
Hence $\sigma_p(R)\cup\set{0}
=
\sigma(A)\cup\set{0}$, completing the proof.
\end{proof}

\begin{proof}[Proof of \Cref{thm:splitting-value}]
Suppose that $h\in\overline{\mathrm{LI}}$.
Then, there is a sequence $\hat h^n\in \mathrm{LI}_{\tau_n}$ with $\hat h^n\to h$ for some values $\tau_n>0$.
Since $\mathrm{LI}_{t}\subseteq \mathrm{LI}_{t/m}$ for any $m\geq 1$, we may assume without loss of generality assume that $\tau_n\to 0$ monotonically.
If some infinite subsequence of $\set{\tau_n}$ are co-rational (i.e. $\tau_{n_k}=q_k\tau_{n_{k+1}}$ for some $q_k\in \Q$), then by further sub-dividing the values of $\tau_{n_k}$, we can require $\tau_{n_{k+1}}\mid \tau_{n_k}$, which forces each $\tau_{n_k}$ to be of the form $\tau_{n_0}/m_k$ for some integers $m_k$.

Hence it suffices to consider the case where all but finitely many of the $\tau_n$ are co-irrational.
We claim that this implies $h$ must be uniformly continuous, in which case $\hat h^n$ satisfying (i) may be constructed as in \Cref{thm:uc-satisfies-assumption}.
Fix any $\epsilon>0$, and choose $\hat h^1,\hat h^2$ with $\normsize{}{\hat h^i-h}<\epsilon$ such that $\tau_1$ and $\tau_2$ are co-irrational.
First, note that if $t_1,t_2\in (k \tau_i,(k+1)\tau_i)$, then
\begin{equation}\label{eqn:splitting-1}
\norm{h(t_1)-h(t_2)}\leq
\normsize{}{h(t_1)-\hat h^i(t_1)}
+
\normsize{}{\hat h^i(t_1)-\hat h^i(t_2)}
+
\normsize{}{\hat h^i(t_2)- h(t_2)}
\leq 2\epsilon +\abs{t_1-t_2}.
\end{equation}
Now, fix $\delta < \min(\tau_1/2,\tau_2/2,\epsilon)$.
Then, for any $t_1\in [0,\infty)$, the interval $(t_1-\delta,t_1+\delta)$ can be covered by at most four open intervals of the form $(n\tau_i,(n+1)\tau_i)$ for $n\in\N$ (note that $\tau_i \N\cap (t_1-\delta,t_1+\delta)$ can contain at most one point due to our choice of $\delta$).
Hence using Equation \eqref{eqn:splitting-1} within each of the intervals $(n\tau_i,(n+1)\tau_i)$, the triangle inequality implies that for all $t_2\in (t_1-\delta,t_1+\delta)$,
\[
\norm{h(t_1)-h(t_2)}\leq 8\epsilon+\abs{t_1-t_2}\leq 9\epsilon
\]
which shows uniform continuity.
This establishes (i).

Property (ii) follows quickly from (i) using the characterization of precompact subsets of $L^\infty([0,t])$ given in \cite[Chapter IV.8, Theorem 18]{dunford1988linearpart1}.
Namely, for some $n>0$, partition $[0,t]$ into $I_1=[0,t/n)$, $I_2=[t/n,2t/n)$, and so forth, and 
let $U[g]:[0,t]\to \R^d$ be given by
\[
U[g](s)=\frac nt\int_{(j-1)t/n}^{jt/n} g(s')\,ds'\quad\quad \text{if $s\in [(j-1)t/n,jt/n)$}.
\]
To show precompactness it suffices to show that $\normsize{}{U[h^{k;t}]-h^{k;t}}_{L^\infty([0,t])}$ may be made arbitrarily small, uniformly in $k$.
Fix $\epsilon>0$ and choose $n>1/\epsilon$ such that $\normsize{}{\hat h^n-h}< \epsilon$.
If $t_1,t_2\in [(k-1)t,kt)$ for any $k\geq 1$, we have that 
\[
\norm{h(t_1)-h(t_2)}_\infty\leq \normsize{}{h(t_1)-\hat h^n(t_1)}+\normsize{}{\hat h^n(t_1)-\hat h^n(t_2)}+\normsize{}{\hat h(t_2)-h(t_2)}<2\epsilon+\abs{t_1-t_2}.
\]
Consequentially, as $h^{k;t}(s)=h(s+kt)$, it is the case that $\sup_{s\in I_j}h^{k;t}(s)-\inf_{s\in I_j}h^{k;t}(s)\leq 2\epsilon+\abs{I_j}\leq 3\epsilon$ for each $1\leq j \leq n$ and each $k\geq 0$.
This then implies that 
\[
-3\epsilon\leq 
U[h^{k;t}](s)-h^{k;t}(s)\leq 3\epsilon
\]
for all $s\in [0,t]$ and all $k\geq 0$, which implies the set is precompact by the above-cited result as desired.
\end{proof}

\end{document}